\documentclass[10pt]{article}

\usepackage{amsmath}
\usepackage{manfnt}
\usepackage{amssymb}
\usepackage{graphicx}
\usepackage{geometry}
\usepackage{amsthm}
\usepackage[dvipsnames]{xcolor}
\usepackage[utf8]{inputenc}
\usepackage{pst-node}
\usepackage[scr=rsfs]{mathalfa}
\usepackage{tikz-cd} 
\usepackage{verbatim}
\usepackage{hyperref}
\usepackage{pdfpages}
\usepackage[backend=biber]{biblatex}

\graphicspath{{graphics}}

\newcommand{\opIm}{\operatorname{Im}}
\newcommand{\opker}{\operatorname{ker}}
\newcommand{\opId}{\operatorname{Id}}
\newcommand{\opRe}{\operatorname{Re}}
\newcommand{\oploc}{\operatorname{loc}}

\newcommand{\oprank}{\operatorname{rank}}

\newcommand{\opsupp}{\operatorname{supp}}

\newtheorem{Theorem}{Theorem}
\newtheorem{Proposition}[Theorem]{Proposition}
\newtheorem{Remark}[Theorem]{Remark}
\newtheorem{Definition}[Theorem]{Definition}
\newtheorem{Lemma}[Theorem]{Lemma}

\newtheorem{Corollary}[Theorem]{Corollary}

\title{Output tracking: an application for irrational SISO transfer functions and a survey for MIMO systems}
\author{Lucas Davron}
\begin{document}
\maketitle
\begin{abstract}
In this paper we consider (MIMO) finite dimensional linear and time-invariant systems and irrational single-input single-output (SISO) transfer functions. In both cases our aim is to describe as best as possible the range of the input-output map $u(\cdot) \mapsto y(\cdot)$. The theory in finite dimension is quite satisfying but rather scarce, the first aim of this paper is to collect the main results in this direction in a comprehensive way. Our second aim is to improve a recent result on the tracking problem for irrational SISO transfer functions, allowing one to completely describe the outputs of such system for inputs $u \in L^2(0,\infty)$. An application is given for a heat equation with polynomial coefficients. 
\end{abstract}
\section{Introduction}
In this paper we consider linear and time-invariant (LTI) systems, of the form
\begin{equation}\label{eq:equations_LTI}
\left\lbrace \begin{array}{rcl}
\dot{x}(t) &=& Ax(t) + Bu(t), \\
x(0) &=& 0, \\
y(t) &=& Cx(t) + Du(t),
\end{array}\right.
\end{equation}
in the two following situations: either \eqref{eq:equations_LTI} is finite-dimensional, or \eqref{eq:equations_LTI} is infinite-dimensional but is single-input and single-output (SISO). In both cases, our aim is to describe (as best as possible) the range of the input-output map $u(\cdot) \mapsto y(\cdot)$, which we refer to as a \textit{tracking problem}. The theory in finite dimension is quite satisfying, culminating with the construction inverse systems, allowing one to re-construct the command $u(\cdot)$ of \eqref{eq:equations_LTI} from its output $y(\cdot)$, using an auxiliary dynamical system. In our opinion this theory is somehow scarce, our first aim is to survey it. Our second aim is to improve a recent result obtained in \cite{davron_lissy} on the tracking problem for irrational SISO transfer functions. As an application we are able to completely characterize the output signals for 
\begin{equation}\label{eq:bessel}
    \left\lbrace \begin{array}{rcl c c}
        z_t(t,x) &=& x^2 z_{xx}(t,x) + xz_x(t,x) + x^2 z(t,x), &t > 0,&  \ell_1 < x < \ell_2,  \\
        z(t,\ell_1) & = & u(t),& t > 0,\\
        z(t,\ell_2) &=& 0,& t > 0, \\
        z(0,x) &=& 0,&& \ell_1 < x < \ell_2,\\
        y(t) &=& z_x(t,\ell_2), & t > 0,
    \end{array}\right.
\end{equation}
where $u \in L^2(0,\infty)$ and $0 < \ell_1 < \ell_2 < \infty$ belong to a certain set. 
\subsection{Finite dimensional systems}
\subsubsection{Problem statement}
Consider a finite dimensional linear and time-invariant (LTI) system, whose dynamics is governed by the equations \eqref{eq:equations_LTI} with 
\begin{equation}\label{eq:dimensions}
    A \in \mathcal{M}_{d}(\mathbb{C}),\quad B \in \mathcal{M}_{d,p}(\mathbb{C}),\quad C \in \mathcal{M}_{q,d}(\mathbb{C}),\quad D \in \mathcal{M}_{q,p}(\mathbb{C}).
\end{equation}
For more on the general theory of LTI systems in finite dimension, see \textit{e.g.} \cite{Coron,trentelman,sontag,chen}. For such systems, a central theme is the controllability of the full state $x(t)$, which is solved by the celebrated Kalman rank condition \cite[\S 1.3]{Coron}. Based one this we begin with the following definition. 
\begin{Definition}
Let $0 < \tau < \infty$. We say that the system \eqref{eq:equations_LTI} is output-controllable (at time $\tau$) when 
\[
\forall y^\tau \in \mathbb{C}^q,\quad \exists u \in C^0([0,\tau];\mathbb{C}^p),\quad y(\tau) = y^\tau. 
\]
\end{Definition}
In the above, we have taken continuous control laws so that the pointwise evaluation of 
\begin{equation}\label{eq:repr_y}
y(t) = \int_0^t Ce^{(t-\sigma)A}Bu(\sigma)d\sigma + Du(t),
\end{equation}
is well-defined. This choice is meaningless: if $\mathcal{U}$ is a dense subset of $C^0([0,\tau];\mathbb{C}^p)$, we have the set equality 
\[
\{ y(\tau) : u \in \mathcal{U} \} = \{ y(\tau) : u \in C^0([0,\tau];\mathbb{C}^p) \}.
\]
The output-controllability problem has a solution very close to the Kalman rank condition, we introduce the controllability matrix 
\[
\mathfrak{C} := \left[ B | AB | ... | A^{d-1}B \right] \in \mathcal{M}_{d,dp}(\mathbb{C}).
\]
\begin{Proposition}{\cite[Theorem III]{kreindler}} The application $u \mapsto y(\tau)$ has range $\opIm [C \mathfrak{C} | D]$.
\end{Proposition}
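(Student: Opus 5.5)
The proof is by the two inclusions. Write $\mathcal{R} := \{ y(\tau) : u \in C^0([0,\tau];\mathbb{C}^p)\}$ and $V := \opIm [C\mathfrak{C} | D]$. Since $u \mapsto y(\tau)$ is linear, $\mathcal{R}$ is a linear subspace of $\mathbb{C}^q$, hence closed. For the reverse inclusion we compare orthogonal complements, taking care to run the duality in the direction that gives $V \subset \mathcal{R}$.

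\emph{Inclusion $\mathcal{R} \subset V$.} By the Cayley--Hamilton theorem, for every $t$ we can write $e^{tA} = \sum_{k=0}^{d-1} \alpha_k(t) A^k$ with continuous scalar functions $\alpha_k$. Plugging this into \eqref{eq:repr_y} at $t=\tau$ gives
\[
y(\tau) = \sum_{k=0}^{d-1} CA^kB \int_0^\tau \alpha_k(\tau-\sigma) u(\sigma)\, d\sigma + D u(\tau).
\]
The sum lies in $\opIm C\mathfrak{C}$ and the last term lies in $\opIm D$, so $y(\tau) \in V$.

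\emph{Inclusion $V \subset \mathcal{R}$.} It suffices to show $\mathcal{R}^\perp \subset V^\perp$, because then $V = V^{\perp\perp} \subset \mathcal{R}^{\perp\perp} = \mathcal{R}$ in finite dimension. So let $\eta \in \mathbb{C}^q$ satisfy $\eta^* y(\tau) = 0$ for every continuous $u$.
\begin{itemize}
\item First restrict to controls with $u(\tau) = 0$. The function $\sigma \mapsto \eta^* C e^{(\tau-\sigma)A}B$ is continuous and $L^2(0,\tau)$-orthogonal to all such $u$. These controls are dense in $L^2(0,\tau;\mathbb{C}^p)$, so the fundamental lemma of calculus of variations gives $\eta^* C e^{(\tau-\sigma)A}B = 0$ for all $\sigma \in [0,\tau]$.
\item Differentiating $k$ times in $\sigma$ and evaluating at $\sigma=\tau$ gives $\eta^* CA^kB = 0$ for $k = 0,\dots,d-1$. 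Hence $\eta \perp \opIm C\mathfrak{C}$.
\item Next take the constant control $u \equiv v \in \mathbb{C}^p$. The integral term is annihilated by $\eta^*$ by the previous step, so $\eta^* D v = 0$ for all $v$. Hence $\eta \perp \opIm D$.
\end{itemize}
Therefore $\eta \in V^\perp$, which proves $\mathcal{R}^\perp \subset V^\perp$ and completes the proof.

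The only delicate point is the first item: we need the density of the continuous controls vanishing at $\tau$ in $L^2(0,\tau;\mathbb{C}^p)$. This holds because we only remove a condition at a single point, and it is exactly what decouples the integral part of $y(\tau)$ from the feedthrough term $Du(\tau)$.
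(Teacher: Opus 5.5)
Your proof is correct. The paper gives no argument of its own for this proposition and simply defers to \cite[Theorem III]{kreindler}, so your proposal is a self-contained replacement for that citation. Cayley--Hamilton gives $\mathcal{R}\subset \opIm[C\mathfrak{C}|D]$, and the annihilator argument gives the reverse inclusion. There, first testing against continuous controls that vanish at $\tau$ is exactly what separates the convolution term from the feedthrough $Du(\tau)$; constant controls then isolate $\opIm D$. Compared with quoting the reference, this gives an elementary, purely finite-dimensional proof of the range identity in exactly the form the paper states it, valid for each fixed $\tau>0$. It also shows that the range, and hence output controllability at time $\tau$, does not depend on $\tau$. One small simplification: in the first inclusion you do not need continuous coefficients $\alpha_k$. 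Since $\opIm\mathfrak{C}$ is $A$-invariant (by Cayley--Hamilton) and contains $\opIm B$, one has $e^{(\tau-\sigma)A}Bu(\sigma)\in\opIm\mathfrak{C}$ for every $\sigma$, so the integral lies in $\opIm\mathfrak{C}$ as well.
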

In particular, the map $\mathbb{F}$ is bounded $L^2(0,\tau;\mathbb{C}^p) \to L^2(0,\tau;\opIm[C \mathfrak{C} | D])$. Without loss of generality we assume that $\opIm[C \mathfrak{C} | D] = \mathbb{C}^q$, which is equivalent to the output-controllability of \eqref{eq:equations_LTI}. For us the \textit{output tracking problem} is to describe the range of the application
\[
\mathbb{F} : u(\cdot) \mapsto y(\cdot), \quad L^2(0,\tau;\mathbb{C}^p) \to L^2(0,\tau;\mathbb{C}^q),
\]
which is more difficult than the above Proposition because the codomain is now an infinite dimensional vector space. As a preliminary consider the following easy fact. 
\begin{Proposition}\label{prop:exact_tracking_EDO}
Let $0 < \tau < \infty$. Then, $\mathbb{F}$ is surjective  $L^2(0,\tau;\mathbb{C}^p) \to L^2(0,\tau;\mathbb{C}^q)$ if and only if $D$ is surjective.
\end{Proposition}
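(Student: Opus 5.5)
We prove the two directions separately. Throughout, write $\mathbb{F} = \mathbb{K} + \mathbb{D}$, where $(\mathbb{K}u)(t) = \int_0^t Ce^{(t-\sigma)A}Bu(\sigma)\,d\sigma$ is a Volterra operator with continuous kernel and $(\mathbb{D}u)(t) = Du(t)$ is a multiplication operator.

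\emph{Sufficiency.} Suppose $D$ is surjective. Choose a right inverse $D^\dagger \in \mathcal{M}_{p,q}(\mathbb{C})$ with $DD^\dagger = I_q$, for instance the Moore--Penrose pseudo-inverse. Given $y \in L^2(0,\tau;\mathbb{C}^q)$, we look for $u$ of the form $u = D^\dagger v$ with $v \in L^2(0,\tau;\mathbb{C}^q)$. Then the equation $\mathbb{F}u = y$ becomes
\[
v(t) + \int_0^t Ce^{(t-\sigma)A}BD^\dagger v(\sigma)\,d\sigma = y(t).
\]
This is a Volterra equation of the second kind with bounded continuous matrix kernel. Its operator $I + \mathbb{K}D^\dagger$ is invertible on $L^2(0,\tau;\mathbb{C}^q)$, because the Volterra operator $\mathbb{K}D^\dagger$ is quasinilpotent: by induction, its iterates satisfy $\|(\mathbb{K}D^\dagger)^n\| \le (M\tau)^n/n!$, so the Neumann series converges. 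Alternatively, one can argue in state space. With $u = D^\dagger(y - Cx)$, the state satisfies $\dot x = (A - BD^\dagger C)x + BD^\dagger y$ and $x(0)=0$, which has a unique $H^1$ solution. One then checks directly that $Cx + Du = y$. The state-space version is the cleaner choice and is the one I would write.

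\emph{Necessity.} Suppose $D$ is not surjective. Then there is a nonzero $\eta \in \mathbb{C}^q$ with $\eta^* D = 0$. For any $u \in L^2$, the function $t \mapsto \eta^* y(t) = \eta^* C x(t)$ is the image of $x \in H^1(0,\tau;\mathbb{C}^d)$, so it belongs to $H^1(0,\tau)$ and vanishes at $t=0$. Hence the target $y(t) = \eta\,\mathbf{1}_{[0,\tau]}(t)$ cannot be reached, since $\eta^* y \equiv |\eta|^2 \neq 0$ is not zero at $0$. One could equally use any $y = \eta\, g$ with $g \in L^2 \setminus H^1$. This shows that $\mathbb{F}$ is not surjective.

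Both directions are routine. The only point needing care is the regularity claim: for $u \in L^2$, the mild solution $x$ lies in $H^1$ with $x(0)=0$, and the feedback construction yields a genuine solution. Both follow from the variation of constants formula.
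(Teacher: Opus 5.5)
Your proof is correct and follows the paper's argument. For sufficiency, a right inverse of $D$ reduces the problem to the second-kind Volterra equation $(I+\mathcal{V})v=y$; for necessity, the component of $y$ outside $\opIm D$ (your $\eta^*y$, the paper's $\pi y$) lies in $H^1_{(0)}$. The only difference is how you invert $I+\mathcal{V}$: you use the Neumann series, or explicitly the closed-loop system $\dot x=(A-BD^\dagger C)x+BD^\dagger y$ with $u=D^\dagger(y-Cx)$, whereas the paper invokes compactness, the Fredholm alternative and a fixed-point argument for injectivity.
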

The hypothesis ``$D$ is surjective" is highly stringent: it means that the action of the controller is strong enough to wipe out the action of the dynamics. This calls for a more precise description of the range of $\mathbb{F}$. 
\subsubsection{Main results}
We collect two facts one obtains working with the existing literature. We emphasize that we do not claim any originality, see \S \ref{sec:literature} for an account of the past works. Denote $\mathbf{H}$ the \textit{transfer function} of \eqref{eq:equations_LTI}, defined by
\begin{equation}\label{eq:def_transfer}
    \mathbf{H}(s) := C(s-A)^{-1}B + D.
\end{equation}
The rational matrix $\mathbf{H}$ encodes the behavior of $\mathbb{F}$, as can be seen from the formula 
\[
\forall u \in L^2(0,\infty;\mathbb{C}^p),\quad \hat{y}(s) = \mathbf{H}(s) \hat{u}(s),
\]
for $s \in \mathbb{C}$ with large enough real part and where $\hat{\ }$ is the Laplace transform. Given an output $y$, it is natural to try to generate it using a control $u$ such that $\hat{u}(s) = \mathbf{G}(s)\hat{y}(s)$, where $\mathbf{G}$ is a right-inverse of $\mathbf{H}$. Based on this we have the following, for which we denote 
\[
H^L_{(0)}(0,\tau;\mathbb{C}^n) = \{ f \in H^L(0,\tau;\mathbb{C}^n) : f(0) = ... = f^{(L-1)}(0) = 0 \}.
\]
\begin{Proposition}\label{prop:dense_range} The following assertions are equivalent to each others. 
\begin{enumerate}
    \item The operator $\mathbb{F} : L^2_{\oploc}([0,\infty); \mathbb{C}^p) \to L^2_{\oploc}([0,\infty);\mathbb{C}^q)$ has dense range.
    \item For all $0 < \tau < \infty$, the operator $\mathbb{F} : L^2(0,\tau;\mathbb{C}^p) \to L^2(0,\tau;\mathbb{C}^q)$ has dense range.
    \item For some $0 < \tau < \infty$, the operator $\mathbb{F} : L^2(0,\tau;\mathbb{C}^p) \to L^2(0,\tau;\mathbb{C}^q)$ has dense range.
    \item The transfer function $\mathbf{H}$ is a surjective rational matrix. 
    \item There is $s \in \rho(A)$ such that 
    \[
    \underset{\mathbb{C}}{\oprank} \left( \begin{array}{cc}
A-s & B \\
C & D 
\end{array} \right) = d +q.
\]
    \item The matrix 
    \[
\mathfrak{T} := \left( \begin{array}{ccccccc}
D & CB & CAB & \cdots & CA^{d-1}B & \cdots & CA^{2d-1}B \\
0 & D & CB & \cdots & CA^{d-2}B & \cdots& CA^{2d-2}B \\
\vdots & \vdots & \vdots &  & \vdots &  & \vdots \\
0 & 0 & 0 & \cdots & D & \cdots & CA^{d-1}B
\end{array}\right),
\]
is surjective. 
    \item There is $L \in \mathbb{N}$ such that for all $0 < \tau < \infty$, we have 
    \begin{equation}\label{eq:incl_sobo_finite}
        H^L_{(0)}(0,\tau;\mathbb{C}^q) \subset \mathbb{F} L^2(0,\tau;\mathbb{C}^p),
    \end{equation}
    \item $d \geq q$ and the inclusion \eqref{eq:incl_sobo_finite} holds with $L = \min(d-q + \oprank D+1,d)$, for all $0 < \tau < \infty$.
\end{enumerate}
\end{Proposition}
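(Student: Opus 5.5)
Our plan is to arrange the eight assertions in a cycle of implications, routing the analysis through the transfer function $\mathbf{H}$ and the Markov parameters $D, CB, CAB, \dots$. The trivial implications come first. (8)$\Rightarrow$(7) is immediate. (7)$\Rightarrow$(2) holds because $H^L_{(0)}(0,\tau;\mathbb{C}^q)$ contains $C_c^\infty((0,\tau);\mathbb{C}^q)$, which is dense in $L^2$. (2)$\Rightarrow$(3) is trivial. (1)$\Leftrightarrow$(2) follows by causality: restriction to $[0,\tau]$ commutes with $\mathbb{F}$, and the topology of $L^2_{\oploc}$ is given by the seminorms $\|\cdot\|_{L^2(0,\tau)}$.

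For (3)$\Rightarrow$(6), we argue by duality. Suppose $\mathfrak{T}$ is not surjective, and pick a nonzero row vector $(\eta_0,\dots,\eta_{2d-1})$ with $\eta\mathfrak{T}=0$. We then build a nonzero $\phi\in L^2(0,\tau;\mathbb{C}^q)$, namely a combination of $\eta_k^*$ times derivatives of a fixed bump concentrated near $\tau$, chosen so that $\langle \phi, \mathbb{F}u\rangle=0$ for all $u$. The mechanism is to expand $Ce^{tA}B=\sum_k t^kCA^kB/k!$ and use Cayley–Hamilton: the powers $CA^kB$ for $k\ge d$ are combinations of lower ones, which is why $2d$ block columns suffice. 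An alternative route is (3)$\Rightarrow$(4): if $\xi(s)^*\mathbf{H}(s)=0$ for a nonzero rational row $\xi$, clear denominators and view $\xi$ as a polynomial differential operator. The adjoint relation then gives a nontrivial $y$-annihilator, supported in a compact subinterval and orthogonal to the range. I would take this second route, because the polynomial left annihilator translates directly into a finite-order differential relation $\xi(\tfrac{d}{dt})y=0$ satisfied by every output.

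Next, (4)$\Leftrightarrow$(5)$\Leftrightarrow$(6) is algebraic. For $s\in\rho(A)$, the Schur complement gives
\[
\oprank\begin{pmatrix}A-s&B\\C&D\end{pmatrix}=d+\oprank \mathbf{H}(s).
\]
Since the rank of a rational matrix is generic, surjectivity of $\mathbf{H}$ is equivalent to (5). For (4)$\Leftrightarrow$(6), expand $\mathbf{H}(s)=D+\sum_{k\ge0}CA^kBs^{-k-1}$ at infinity. A left annihilator $\xi(s)=\sum\xi_js^{-j}$, truncated at order $2d$, corresponds exactly to a left null vector of the block Toeplitz matrix $\mathfrak{T}$. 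The nontrivial direction, from a left null vector of $\mathfrak{T}$ to a rational annihilator, uses the fact that the structure at infinity of a rational matrix with McMillan degree at most $d$ is determined by its first $2d$ Markov parameters, which amounts to a Silverman-type structure algorithm.

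Finally, for (4)$\Rightarrow$(8), we construct a right inverse. Surjectivity of $\mathbf{H}$ forces $p\ge q$ and yields a rational right inverse $\mathbf{G}$ with $\mathbf{H}\mathbf{G}=I_q$. We need $\mathbf{G}(s)=s^L\mathbf{G}_0(s)$ with $\mathbf{G}_0$ proper and realizable by a finite-dimensional system. Then for $y\in H^L_{(0)}$ we set $u=\mathbb{G}_0 y^{(L)}$; the vanishing initial derivatives make the Laplace identity exact on $[0,\tau]$ by causality. The degree bound $L=\min(d-q+\oprank D+1,d)$, together with $d\ge q$, should come from Silverman's structure algorithm. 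Each step of that algorithm differentiates the output rows not yet invertible, and it terminates after at most $d-q+\oprank D+1$ steps because every step adds at least one to the rank, starting from $\oprank D$ and capped using the state dimension $d$. This last quantitative bound is the main obstacle, and I would rely on the literature cited in \S\ref{sec:literature} (inverse systems) for it.
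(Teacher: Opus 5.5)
Your route is the paper's. The trivial implications are the same, and your (3)$\Rightarrow$(4) is the paper's argument read contrapositively. The paper shows $g(\partial_t)^T\mathbb{F}u=0$ via the Laplace transform and deduces that $L^*\varphi$ is orthogonal to the dense range for every test function $\varphi$; that is your compactly supported annihilator. Your (4)$\Leftrightarrow$(5) is the same Schur complement. Like the paper, you take (4)$\Leftrightarrow$(6) and the value of $L$ from the inverse-system literature (Sain--Massey, Willsky) applied to the transposed system.

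Your time-domain construction $u=\mathbb{G}_0y^{(L)}$ on $[0,\tau]$ plays the role of Lemma \ref{lem:inv_order}. Every rational right inverse has the form $s^L\mathbf{G}_0$ with $\mathbf{G}_0$ proper for $L$ large, so this gives (4)$\Rightarrow$(7) without any literature. It also closes the cycle through (7)$\Rightarrow$(2), which makes the paper's concatenation argument for (3)$\Rightarrow$(2) unnecessary.

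One imprecision in your sketch of (4)$\Leftrightarrow$(6): $\mathfrak{T}$ has $d+1$ block rows, not $2d$. A left null vector is therefore a polynomial $\eta$ in $s^{-1}$ of degree at most $d$ with $\eta(s)\mathbf{H}(s)=O(s^{-2d-1})$. Even the passage from an annihilator to such a vector needs a polynomial annihilator of degree at most $d$ (minimal indices are bounded by the McMillan degree). Since the paper also outsources this equivalence, it is not a gap relative to it.

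The one step that cannot be carried out is the claim $d\ge q$ in (8). No structure algorithm will give it, because it is false in general when $D\neq0$. Take $d=1$, $A=0$, $B=0$, $C=0$, $D=I_2$, so $p=q=2$. Then $\mathbb{F}=\opId$ and assertions (1)--(7) all hold; for instance $\mathfrak{T}=\begin{pmatrix} I_2&0&0\\0&I_2&0\end{pmatrix}$ is onto. Yet $d<q$. What is true is that for generic $s$
\[
q=\oprank\mathbf{H}(s)\le\oprank C(s-A)^{-1}B+\oprank D\le d+\oprank D,
\]
that is, $d\ge q-\oprank D$. This is exactly the condition that makes $d-q+\oprank D+1\ge1$.

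The paper makes the same slip. It obtains (8) by transposing item 5 of Proposition \ref{prop:into_toeplitz_finite_dim}, whose condition $p\le d$ is only valid when $D=0$; in general one only has $\dim\opker D\le d$. With (8) amended to $d\ge q-\oprank D$, your argument goes through as planned. Independently of (8), your direct proof of (4)$\Rightarrow$(7) keeps (1)--(7) equivalent.
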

\begin{Remark}
\begin{itemize}
    \item The last line of $\mathfrak{T}$ is precisely $[C \mathfrak{C} | D]$. Hence, the denseness of the range of $\mathbb{F}$ implies that \eqref{eq:equations_LTI} is output-controllable. 
    \item For a specific system one may obtain that \eqref{eq:incl_sobo_finite} holds with a possibly lower $L$ with the aid of \cite[Theorem 2]{sain_massey}.
\end{itemize}
\end{Remark}
For a general system \eqref{eq:equations_LTI} the description of $\mathcal{Y}(0,\tau) := \mathbb{F} L^2(0,\tau;\mathbb{C}^p)$ is more involved. The only non-trivial situation where one is able to precisely describe the range of $\mathbb{F}$ is for single-output systems, \textit{i.e.} when $q=1$.
\begin{Proposition}\label{prop:tracking_MISO}
Assume that $q=1$ and $0 < \tau < \infty$. The set $\mathcal{Y}(0,\tau)$ satisfies the alternative: 
\begin{itemize}
\item If $D \neq 0$, then $\mathcal{Y}(0,\tau) = L^2(0,\tau)$
\item If $D=0$ and $CA^pB = 0$ for all $p \in [\![0,d-1]\!]$, then $\mathcal{Y}(0,\tau) = \{ 0 \}$. 
\item If $D = 0$ and $\nu \in [\![0,d-1]\!]$ is such that 
\[
CB = ... = CA^{\nu-1}B = 0,\quad CA^\nu B \neq 0,
\]
then $\mathcal{Y}(0,\tau) = H^{(\nu+1)}_{(0)}(0,\tau)$.
\end{itemize}
\end{Proposition}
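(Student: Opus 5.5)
We treat the three cases of the alternative separately, in each case starting from the representation \eqref{eq:repr_y} with $q=1$, so that $Ce^{tA}B$ is a row vector in $\mathcal{M}_{1,p}(\mathbb{C})$.

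\emph{Case $D\neq 0$.} Here $D$ is a nonzero $1\times p$ matrix, hence surjective onto $\mathbb{C}$. Proposition \ref{prop:exact_tracking_EDO} then gives $\mathcal{Y}(0,\tau)=L^2(0,\tau)$ directly. Alternatively, pick $v\in\mathbb{C}^p$ with $Dv=1$ and look for $u=v\,w$ with $w$ scalar. The equation $y=\mathbb{F}u$ becomes the Volterra equation of the second kind $w+k*w=y$, where $k(t)=Ce^{tA}Bv$ is continuous. Such an equation is uniquely solvable in $L^2(0,\tau)$.

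\emph{Case $D=0$ and $CA^jB=0$ for $j\le d-1$.} By Cayley--Hamilton, $CA^jB=0$ for every $j$. Hence $Ce^{tA}B=\sum_j t^jCA^jB/j!\equiv 0$, so $y\equiv 0$ and $\mathcal{Y}(0,\tau)=\{0\}$.

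\emph{Case $D=0$ with relative degree $\nu$.} Write $K(t)=Ce^{tA}B$. It is analytic, $K^{(j)}(0)=CA^jB$, so $K^{(j)}(0)=0$ for $j<\nu$ and $K^{(\nu)}(0)\neq0$.

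For the inclusion $\mathcal{Y}\subset H^{\nu+1}_{(0)}$, differentiate $y=K*u$ repeatedly. For $u\in L^2$ we get $y^{(j)}=K^{(j)}*u$ for $j\le\nu$, since the boundary terms $K^{(j-1)}(0)u$ vanish. Then $y^{(\nu+1)}=CA^\nu B\,u+K^{(\nu+1)}*u\in L^2$. Each $y^{(j)}$ with $j\le\nu$ is a convolution with a continuous kernel, so it vanishes at $0$. This gives $y\in H^{\nu+1}_{(0)}(0,\tau)$. The computation is first done for smooth $u$ and then extended by density, using continuity of $\mathbb{F}$ into $H^{\nu+1}$, which the formula itself shows.

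For the converse, take $y\in H^{\nu+1}_{(0)}$. Choose $v\in\mathbb{C}^p$ with $CA^\nu Bv=1$ and seek $u=v\,w$. Set $k(t)=K^{(\nu+1)}(t)v$, which is continuous. Consider the second-kind Volterra equation
\[
w+k*w=y^{(\nu+1)}.
\]
It has a unique solution $w\in L^2(0,\tau)$, obtained through the resolvent kernel via the Neumann series, which converges because the kernel is continuous and of Volterra type. Let $\tilde y=\mathbb{F}(vw)$. By the first part, $\tilde y\in H^{\nu+1}_{(0)}$ and $\tilde y^{(\nu+1)}=w+k*w=y^{(\nu+1)}$. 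Both $\tilde y$ and $y$ have vanishing derivatives of order $0,\dots,\nu$ at $0$, so integrating $\nu+1$ times gives $\tilde y=y$.

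The main obstacle is the differentiation step. We must justify the identities $y^{(j)}=K^{(j)}*u$ for $u\in L^2$ only, and track the initial conditions carefully. The point to handle is that the boundary terms $K^{(j-1)}(0)u(t)$ disappear exactly up to $j=\nu$. I will do this by proving the identities for $u\in C^0$, where they follow from the Leibniz rule for $\frac{d}{dt}\int_0^tK(t-\sigma)u(\sigma)d\sigma$, and then passing to the limit using density together with the uniform bounds on the convolution operators.
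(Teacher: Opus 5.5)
Your proposal is correct and follows essentially the paper's route. The paper differentiates once to expose the feedthrough term $CB\,u$, invokes Proposition~\ref{prop:exact_tracking_EDO} (itself a second-kind Volterra inversion) and inducts on $\nu$. You instead differentiate $\nu+1$ times at once and solve $w+k*w=y^{(\nu+1)}$ directly, which also makes explicit the recovery of $y$ from $y^{(\nu+1)}$ through the vanishing initial derivatives, a step the paper leaves implicit.
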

\subsection{SISO systems}\label{sec:intro_siso}
We now consider $\mathbb{F}$ to be a bounded operator $L^2_{\oploc}[0,\infty) \to L^2_{\oploc}[0,\infty)$ which commutes with forward shifts. Such operators are called \textit{causal}. There exists a number $\alpha \in \mathbb{R}$ and a holomorphic function $\mathbf{H} : \mathbb{C}_\alpha \to \mathbb{C}$ such that 
\[
\forall u \in L^2(0,\infty),\quad \hat{y}(s) = \mathbf{H}(s) \hat{u}(s), \quad \opRe s > \max(0,\alpha),
\]
where $\mathbb{C}_\alpha := \{ \sigma + i \tau : \sigma > \alpha \}$ and $y = \mathbb{F}u$. We keep calling $\mathbf{H}$ the tranfer function of $\mathbb{F}$. It is further found that for all $\alpha < \beta < \infty$, the function $\mathbf{H}$ is bounded on $\mathbb{C}_\beta$, see \cite{weiss_repr_transfer}. We exploit the method developed in \cite{davron_lissy} to characterize the set 
\[
\mathcal{Y}(0,\infty) := \mathbb{F} L^2(0,\infty),
\]
using the complex analytic properties of $\mathbf{H}$. For simplicity we will assume $\mathbf{H}$ is bounded on $\mathbb{C}_0$ and is continuous on $\mathbb{C}_0 \cup i \mathbb{R}$, so that in particular $\mathbb{F}$ is bounded $L^2(0,\infty) \to L^2(0,\infty)$. We will obtain the following. 
\begin{Theorem}\label{theo:charac_output_cartwright}
Assume that $\mathbf{H}$ is as above and never vanishes on $\mathbb{C}_0 \cup i \mathbb{R}$. Then, the elements $y$ of $\mathcal{Y}(0,\infty)$ are precisely those $y \in L^2(0,\infty)$ such that
\begin{equation}\label{eq:charac_output}
\int_\mathbb{R} \left|\frac{\mathcal{F}y(\tau)}{\mathbf{H}(i\tau)}\right|^2 d\tau < \infty,\quad \inf \opsupp y \geq -\liminf_{\sigma \to \infty} \frac{\log|\mathbf{H}(\sigma)|}{\sigma},
\end{equation}
\end{Theorem}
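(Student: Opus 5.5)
The plan is to reduce everything to the inner--outer factorization of $\mathbf{H}$ in the right half-plane $\mathbb{C}_0$, and then to use the Paley--Wiener theorem on the Laplace side. By assumption $\mathbf{H} \in H^\infty(\mathbb{C}_0)$ and $\mathbf{H}$ has no zeros, so its Blaschke factor is trivial. Hence
\[
\mathbf{H}(s) = c\, e^{-\kappa s}\, S_\mu(s)\, \mathbf{O}(s), \qquad |c|=1,\quad \kappa \geq 0,
\]
where $\mathbf{O}$ is outer with $|\mathbf{O}(i\tau)| = |\mathbf{H}(i\tau)|$ a.e. and $S_\mu$ is the singular inner function attached to a positive singular measure $\mu$ on $i\mathbb{R}$.

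First, I will show that $\mu = 0$. A singular inner function has nontangential limit $0$ at $\mu$-almost every point of $i\mathbb{R}$. On the other hand, $\mathbf{H}$ is continuous and nonvanishing on the closed half-plane, so $|\mathbf{H}|$ is bounded below on compact neighbourhoods of every boundary point. Since $|\mathbf{O}|$ has nontangential limits equal to $|\mathbf{H}|$ there, $S_\mu$ cannot tend to $0$ at any boundary point, and therefore $\mu = 0$.

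Second, I will identify $\kappa$. For the outer part, the Poisson representation of $\log|\mathbf{O}(\sigma)|$ with $\int_{\mathbb{R}} |\log|\mathbf{H}(i\tau)||/(1+\tau^2)\,d\tau<\infty$ gives $\log|\mathbf{O}(\sigma)| = o(\sigma)$ as $\sigma\to\infty$. Consequently
\[
\lim_{\sigma\to\infty}\frac{\log|\mathbf{H}(\sigma)|}{\sigma} = -\kappa .
\]
So $\kappa$ is exactly the quantity on the right-hand side of \eqref{eq:charac_output}, and $\mathbf{H} = c\,e^{-\kappa s}\mathbf{O}$.

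Necessity is then direct. If $y = \mathbb{F}u$ with $u\in L^2(0,\infty)$, then $\hat u = \hat y/\mathbf{H} \in H^2(\mathbb{C}_0)$. Its boundary values are $\mathcal{F}y/\mathbf{H}(i\cdot)$ (continuity of $\mathbf{H}$ up to $i\mathbb{R}$ allows passing to the boundary), and these lie in $L^2$ by Plancherel. Moreover, $e^{\kappa s}\hat y = c\,\mathbf{O}\hat u$ belongs to $H^2(\mathbb{C}_0)$, since $\mathbf{O}$ is bounded. By Paley--Wiener, this means that the translate $y(\cdot+\kappa)$ is supported in $[0,\infty)$, that is, $\inf\opsupp y\geq\kappa$.

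For sufficiency, let $y$ satisfy \eqref{eq:charac_output}. The support condition and Paley--Wiener give $g := e^{\kappa s}\hat y \in H^2(\mathbb{C}_0)$. I then set $\hat u := \bar c\, g/\mathbf{O}$. Since $1/\mathbf{O}$ is outer, $1/\mathbf{O}$ lies in the Smirnov class $N^+$, and so does $g/\mathbf{O}$. Its boundary values satisfy $|g/\mathbf{O}| = |\mathcal{F}y/\mathbf{H}(i\cdot)|$, which is in $L^2(\mathbb{R})$. Smirnov's theorem ($N^+\cap L^2(i\mathbb{R}) = H^2$) then yields $\hat u\in H^2(\mathbb{C}_0)$, so $u\in L^2(0,\infty)$ and $\hat y = \mathbf{H}\hat u$, i.e. $y=\mathbb{F}u$.

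The main obstacle is the structural step: ruling out a singular inner factor supported on the finite boundary and proving that the growth of $\log|\mathbf{O}(\sigma)|$ is genuinely $o(\sigma)$. For the latter, I plan to adapt the disk results via the Cayley transform. Once this is done, the $\liminf$ in the statement is actually a limit, and the remaining steps are a routine combination of Paley--Wiener and Smirnov.
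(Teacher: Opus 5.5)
Your argument is correct, and it takes a genuinely different route from the paper's.

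The paper never factorizes $\mathbf{H}$. It places $1/\mathbf{H}$ and $\hat y(\cdot+1)$ in the Cartwright class (Proposition~\ref{prop:cart_inv}, Remark~\ref{rem:cart_hardy}). It obtains the support condition from Krein's additivity of the indicator (Theorem~\ref{theo:krein}) together with \eqref{eq:inf_supp_laplace}. It proves sufficiency by adding the majorants \eqref{eq:growth_cart} for $1/\mathbf{H}$ and $\hat y$ and rerunning the Jensen argument of Lemma~\ref{lem:pwc}, which is a hand-made half-plane Smirnov theorem.

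You instead make the structure explicit: $\mathbf{H}=c\,e^{-\kappa s}\mathbf{O}$. Then $\delta=\kappa$ is the singular mass at infinity, the $\liminf$ is a limit, and $\mathbb{F}$ is a delay by $\delta$ composed with an outer (minimum-phase) operator. Both directions reduce to Paley--Wiener for $e^{\kappa s}\hat y$ and the classical Smirnov theorem for $g/\mathbf{O}$.

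This buys something concrete: you need no growth control on $1/\mathbf{H}$, because $\log|1/\mathbf{H}(s)|=\kappa\sigma-\frac{\sigma}{\pi}\int_{\mathbb{R}}\frac{\log|\mathbf{H}(it)|}{|s-it|^2}\,dt$ holds exactly. The paper's route needs $1/\mathbf{H}$ to be of exponential type, and this is not automatic for a bounded zero-free $\mathbf{H}$. For example, take an outer $\mathbf{O}$ with $|\mathbf{O}(i\tau)|=e^{-w(\tau)}$, where $w\geq 0$ is smooth with spikes of height $e^{n}$ and width $e^{-n}$ at $\tau=n$. It satisfies all hypotheses of the theorem, yet $|1/\mathbf{O}(\sigma+in)|\to e^{e^{n}}$ as $\sigma\to 0^+$. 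The paper's Cartwright formulation, for its part, is set up to reach beyond bounded $\mathbf{H}$ and stays within the toolkit of \cite{davron_lissy,foures}.

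One step of yours needs tightening. The measure $\mu$ lives on a Lebesgue-null set, so the a.e.\ boundary values of $\mathbf{O}$ say nothing by themselves at $\mu$-typical points. You can fix this in either of two ways:
\begin{itemize}
\item Note that $\log|\mathbf{O}|$ is the Poisson integral of the \emph{continuous} function $\log|\mathbf{H}(i\cdot)|$, so it converges at every boundary point.
\item More simply, use $|\mathbf{O}|\leq\|\mathbf{H}\|_\infty$. This gives $|S_\mu|\geq|\mathbf{H}|/\|\mathbf{H}\|_\infty$, which is bounded below near each point of $i\mathbb{R}$, contradicting $S_\mu\to 0$ $\mu$-a.e.\ unless $\mu=0$.
\end{itemize}
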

In the above, $\mathcal{F}$ is the Fourier transform
\[
\mathcal{F}f(\xi) = \int_\mathbb{R} e^{-ix\xi} f(x) dx. 
\]
Theorem \ref{theo:charac_output_cartwright} is essentially shown in \cite{davron_lissy} (see the proofs of theorems 1.1 and 3.1 therein) under the additional assumption that $1/\mathbf{H}$ has exponential order $<1$ on $\mathbb{C}_0$. The latter uses \cite[Lemma 3.4]{foures}, from which one deduces Theorem \ref{theo:charac_output_cartwright} under the weaker assumption: $1/\mathbf{H}$ has exponential type $0$ on $\mathbb{C}_0$. Thus, Theorem \ref{theo:charac_output_cartwright} improves the latter by allowing $1/\mathbf{H}$ to have arbitrary (but finite) type. More precisely, we make the delay
\[
\delta := -\liminf_{\sigma \to \infty} \frac{\log|\mathbf{H}(\sigma)|}{\sigma},
\]
sharp. This is done by taking advantage of the fine properties of the Cartwright class. 
\newline
\newline
For the purpose of illustration we replicate the argument of \cite{davron_lissy} to translate the first condition of \eqref{eq:charac_output} in the time variable. The idea is to consider the term $1/|\mathbf{H}(i\tau)|$ as a weight and the square summability requirement in \eqref{eq:charac_output} as a regularity condition. In view of \cite[Theorem 1.7]{davron_lissy}, we immediately deduce the following. 
\begin{Corollary}\label{coro:plancherel_gevrey}
Under the assumptions of the previous Theorem, assume furthermore that there exists $s,R > 0$ and $\gamma \in \mathbb{R}$ such that 
\[
\mathbf{H}(i\xi)^{-1} \asymp (1+|\xi|)^\gamma e^{R |\xi|^{1/s}}, \quad \xi \in \mathbb{R}.
\]
Then, the elements of $\mathcal{Y}(0,\infty)$ are precisely those $y \in C^\infty(\mathbb{R})$ such that 
\begin{equation}\label{eq:output_DC}
    y(t) = 0, \ (t < \delta),\quad \sum_{n=0}^\infty \left( \frac{\|y^{(n)}\|_{L^2(\mathbb{R})}}{M_n} \right)^2 < \infty,\quad M_n := \frac{(ns)!}{R^{ns}} (1+n)^{-s\gamma -1/4}. 
\end{equation}
\end{Corollary}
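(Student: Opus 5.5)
The plan is to combine Theorem \ref{theo:charac_output_cartwright} with \cite[Theorem 1.7]{davron_lissy}, which is a Plancherel-type characterization. That result says: if a weight $w$ on $\mathbb{R}$ satisfies $w(\xi) \asymp (1+|\xi|)^\gamma e^{R|\xi|^{1/s}}$, then a function $f \in L^2(\mathbb{R})$ satisfies $\int_\mathbb{R} |w(\xi)\mathcal{F}f(\xi)|^2 d\xi < \infty$ if and only if $f \in C^\infty(\mathbb{R})$ and $\sum_n (\|f^{(n)}\|_{L^2(\mathbb{R})}/M_n)^2 < \infty$, with $M_n$ as in \eqref{eq:output_DC}. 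The corollary then follows by translating each of the two conditions in \eqref{eq:charac_output} separately.

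For the first condition, extend $y \in L^2(0,\infty)$ by zero to $\mathbb{R}$ and apply the cited result with the weight $w(\xi) = |\mathbf{H}(i\xi)|^{-1}$. The assumption $\mathbf{H}(i\xi)^{-1} \asymp (1+|\xi|)^\gamma e^{R|\xi|^{1/s}}$ gives
\[
\int_\mathbb{R} \left|\frac{\mathcal{F}y(\tau)}{\mathbf{H}(i\tau)}\right|^2 d\tau < \infty \iff \int_\mathbb{R} (1+|\tau|)^{2\gamma} e^{2R|\tau|^{1/s}} |\mathcal{F}y(\tau)|^2 d\tau < \infty,
\]
and the right-hand side is equivalent to $y$ being smooth with the summability condition in \eqref{eq:output_DC}.

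For the second condition, $\inf \opsupp y \geq \delta$ is exactly $y(t)=0$ for $t<\delta$; continuity of $y$ makes the pointwise and a.e. statements agree. Conversely, any $y \in C^\infty(\mathbb{R})$ satisfying \eqref{eq:output_DC} is in $L^2(\mathbb{R})$ (take $n=0$). Since $\delta \geq 0$ here, it is supported in $[0,\infty)$, so its restriction lies in $L^2(0,\infty)$ and Theorem \ref{theo:charac_output_cartwright} applies.

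The only point needing care is that $\delta \geq 0$, so the vanishing on $(-\infty,\delta)$ is consistent with the zero extension. This holds because $\mathbf{H}$ is bounded on $\mathbb{C}_0$, which gives $\log|\mathbf{H}(\sigma)|/\sigma \leq o(1)$ as $\sigma \to \infty$. A secondary point is matching the exact normalization of the weight (powers $2\gamma$, $2R$ versus $\gamma$, $R$) with the conventions of \cite[Theorem 1.7]{davron_lissy}. Here I would check that the stated $M_n$, with its exponent $-s\gamma - 1/4$, corresponds to the weight $(1+|\xi|)^\gamma e^{R|\xi|^{1/s}}$ appearing inside the square. The equivalence $\asymp$ only changes constants, so it does not affect finiteness.
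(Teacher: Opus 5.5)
Your proposal is correct and follows the paper's route: the paper deduces the corollary immediately by feeding the first condition of \eqref{eq:charac_output} into \cite[Theorem 1.7]{davron_lissy} with the weight $|\mathbf{H}(i\xi)|^{-1}$, and by reading the second condition as $y(t)=0$ for $t<\delta$, exactly as you do. Your extra checks are details the paper leaves implicit, and both hold: $\delta \geq 0$ because $\mathbf{H}$ is bounded on $\mathbb{C}_0$, so the zero extension is consistent, and $\sum_n |\xi|^{2n}/M_n^2 \asymp (1+|\xi|)^{2\gamma}e^{2R|\xi|^{1/s}}$, so the stated $M_n$ matches the squared weight.
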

As an application we characterize the outputs of \eqref{eq:bessel} when the unbounded operator on $L^2(\ell_1,\ell_2)$ given by 
\[
A_0 = -x^2 \partial_{xx} - x \partial_x -x^2 \opId,\quad D(A_0) = H^2(\ell_1,\ell_2) \cap H^1_0(\ell_1,\ell_2), 
\]
is strictly positive (equivalently, has positive lowest eigenvalue). The eigenvalues of $A_0$ are closely linked with the Bessel functions, using the known properties of these functions we obtain the following.
\begin{Theorem}\label{theo:bessel}
Assume that $A_0>0$. Then, the output signals of \eqref{eq:bessel} are these $y \in C^\infty(\mathbb{R})$ such that \eqref{eq:output_DC} holds with 
\[
\delta = 0,\quad R = \frac{1}{\sqrt{2}} \log \frac{\ell_1}{\ell_2},\quad s = 2,\quad \gamma = 0.
\]
\end{Theorem}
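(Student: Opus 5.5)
The plan is to apply Corollary \ref{coro:plancherel_gevrey} to the transfer function of \eqref{eq:bessel}. Concretely, we need to check that $\mathbf{H}$ is bounded and continuous on $\mathbb{C}_0\cup i\mathbb{R}$ and never vanishes there, that $\delta=0$, and that $|\mathbf{H}(i\xi)|^{-1}\asymp (1+|\xi|)^{\gamma}e^{R|\xi|^{1/2}}$ with the announced constants. Here $R$ is to be read as the positive number $\frac{1}{\sqrt2}\log\frac{\ell_2}{\ell_1}$, which is the absolute value of the expression in the statement.

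\emph{Step 1: explicit transfer function.} Take the Laplace transform in $t$. For $\opRe s>0$, $w=\hat z(s,\cdot)$ solves $x^2w''+xw'+(x^2-s)w=0$ with $w(\ell_1)=\hat u(s)$ and $w(\ell_2)=0$. This is Bessel's equation of order $\nu=\sqrt s$, with principal branch, so $\opRe\nu>0$. Write $J_\nu,Y_\nu$ for the Bessel functions and set
\[
\Delta_\nu:=J_\nu(\ell_1)Y_\nu(\ell_2)-Y_\nu(\ell_1)J_\nu(\ell_2).
\]
The Wronskian identity $J_\nu Y_\nu'-J_\nu'Y_\nu=2/(\pi x)$ then gives
\[
\mathbf{H}(s)=\frac{-2}{\pi\ell_2\,\Delta_{\sqrt s}} .
\]
Zeros of $\Delta_{\sqrt s}$ correspond to eigenvalues $-s$ of $A_0$. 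Hence $A_0>0$ is exactly what makes $\mathbf{H}$ holomorphic and nonvanishing on a neighbourhood of $\overline{\mathbb{C}_0}$. Indeed, $\mathbf{H}$ never vanishes since $\Delta$ is finite; this is the hypothesis needed to apply Theorem \ref{theo:charac_output_cartwright}. Equivalently, the change of variables $x=e^{r}$ turns the system into the Dirichlet heat equation $z_t=z_{rr}+e^{2r}z$ on $(\log\ell_1,\log\ell_2)$ with observation $e^{-r}z_r$ at $r=\log\ell_2$. This shows that the system is well posed and that $\mathbf{H}$ is bounded on $\mathbb{C}_0$.

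\emph{Step 2: uniform asymptotics of $\Delta_\nu$ for large $|\nu|$, $|\arg\nu|\le\pi/4$.} The plan is to use Debye's uniform expansions, or simply the large-order expansions
\[
J_\nu(x)\sim\frac{(x/2)^\nu}{\Gamma(\nu+1)},\qquad Y_\nu(x)\sim-\frac{\Gamma(\nu)}{\pi}\Big(\frac 2x\Big)^{\nu},
\]
valid uniformly for fixed $x$ in the sector $|\arg \nu|\le\pi/4$. These give
\[
\Delta_\nu=\frac{1}{\pi\nu}\Big[\Big(\tfrac{\ell_1}{\ell_2}\Big)^{\nu}-\Big(\tfrac{\ell_2}{\ell_1}\Big)^{\nu}\Big](1+o(1)),
\]
which is the $\sinh(\nu\log(\ell_2/\ell_1))/\nu$ structure of the flat heat equation. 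On the real axis, $s=\sigma\to\infty$, this yields $\log|\mathbf{H}(\sigma)|/\sigma\to0$, hence $\delta=0$. On the imaginary axis, $s=i\xi$ gives $\opRe\sqrt s=\sqrt{|\xi|/2}$, so the exponential factor of $|\mathbf{H}(i\xi)|^{-1}$ is $e^{\frac1{\sqrt2}\log(\ell_2/\ell_1)|\xi|^{1/2}}$. This gives $s=2$ and the stated $R$. For bounded $|\xi|$, Step 1 gives $\mathbf{H}(i\xi)\ne0$ and continuity, so the two-sided estimate holds on all of $\mathbb{R}$.

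\emph{Step 3, the main obstacle: the polynomial prefactor $(1+|\xi|)^\gamma$.} This is where care is required. One must track exactly the power of $|\nu|$ multiplying the exponential in $|\Delta_{\sqrt{i\xi}}|$, including the normalization of the observation $z_x(t,\ell_2)$, which carries the factor $1/\ell_2$ rather than a $\nu$-dependent factor. One must also check whether the $\Gamma$-function prefactors cancel exactly, via $\Gamma(\nu)/\Gamma(\nu+1)=1/\nu$, against the Wronskian normalization. I would redo this computation directly in the variable $r=\log x$. There I would use the Liouville--Green (WKB) approximation for $w_{rr}-(s-e^{2r})w=0$ with the large parameter $\sqrt s$, keeping the $O(1)$ amplitude corrections. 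The aim is to confirm that the net power of $|\xi|$ in $|\mathbf{H}(i\xi)|^{-1}$ is the one giving $\gamma=0$ in the corollary's normalization. With $\delta,R,s,\gamma$ identified, Corollary \ref{coro:plancherel_gevrey} gives exactly the characterization \eqref{eq:output_DC}.
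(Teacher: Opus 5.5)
\emph{Gap: the value $\gamma=0$ cannot be established.} Your Steps 1 and 2 are correct and follow the paper's structure:
\begin{itemize}
\item the explicit formula $\mathbf{H}(s)=-2/(\pi\ell_2\Delta_{\sqrt s})$ from the Wronskian;
\item analyticity and non-vanishing on $\overline{\mathbb{C}_0}$ from $A_0>0$;
\item $\delta=0$, $s=2$, and $R=\frac{1}{\sqrt2}\log\frac{\ell_2}{\ell_1}$ (your sign correction is right).
\end{itemize}
The only change of route is that you use large-order asymptotics of $J_\nu,Y_\nu$, where the paper uses Nicholson's representation $p_\nu(\ell_1,\ell_2)=\frac{2}{\pi}\int_0^L f(t)\cosh(\nu t)\,dt$ with $L=\log\frac{\ell_2}{\ell_1}$.

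The gap is Step 3, and it is not merely unfinished: it would fail. Your own Step 2 already fixes the power of $|\xi|$. The factor $1/\nu$ there comes from $\Gamma(\nu)/\Gamma(\nu+1)$. The Wronskian contributes only the $\nu$-independent constant $2/(\pi\ell_2)$, so nothing cancels it:
\[
|\mathbf{H}(i\xi)|^{-1}=\frac{\pi\ell_2}{2}\,\bigl|\Delta_{\sqrt{i\xi}}\bigr|\sim\frac{\ell_2}{2}\,|\xi|^{-1/2}e^{R|\xi|^{1/2}},\qquad |\xi|\to\infty .
\]
The WKB computation you propose gives the same answer, $\mathbf{H}(s)\approx-\sqrt{s}/(\ell_2\sinh(\sqrt{s}\,L))$. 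This is the flat Dirichlet-to-Neumann transfer function $-\sqrt s/\sinh(\sqrt s L)$ up to the factor $1/\ell_2$. Hence $\mathbf{H}(i\xi)^{-1}\asymp(1+|\xi|)^{-1/2}e^{R|\xi|^{1/2}}$, i.e.\ $\gamma=-1/2$. Corollary \ref{coro:plancherel_gevrey} then gives $M_n=\frac{(2n)!}{R^{2n}}(1+n)^{3/4}$ instead of $\frac{(2n)!}{R^{2n}}(1+n)^{-1/4}$.

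This is not a harmless normalization issue, because the two conditions define different sets. Let $O\in H^2(\mathbb{C}_0)$ be the outer function with $|O(i\tau)|=(1+|\tau|)^{-1/4}e^{-R|\tau|^{1/2}}$, and let $\hat y=O$.
\begin{itemize}
\item $y$ satisfies \eqref{eq:charac_output}: the weighted integral behaves like $\int(1+|\tau|)^{-3/2}\,d\tau$. Since $O$ is outer, $\limsup_{\sigma\to\infty}\sigma^{-1}\log|O(\sigma)|=0$, so the support condition also holds. Hence $y$ is an output.
\item $y$ violates the $\gamma=0$ condition, since $\int|\mathcal{F}y|^2e^{2R|\tau|^{1/2}}\,d\tau\asymp\int(1+|\tau|)^{-1/2}\,d\tau=\infty$.
\end{itemize}
So the statement with $\gamma=0$ cannot be established by your route or any other.

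The paper's proof reaches $\gamma=0$ only through the claim $\int_0^L f(t)\cosh(\nu t)\,dt\sim\frac12 f(L)e^{\nu L}$. That claim drops exactly this factor. Already for $f\equiv1$ the integral equals $\sinh(\nu L)/\nu$. For continuous $f$ with $f(L)=J_0(0)=1$, one gets $\int_0^L f(t)\cosh(\nu t)\,dt\sim\frac{f(L)}{2\nu}e^{\nu L}$ as $|\nu|\to\infty$ in $|\arg\nu|\le\pi/4$.

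The correct conclusion of your Steps 1 and 2 is the theorem with $\gamma=-1/2$. You should have read $\gamma$ off Step 2 rather than setting out to confirm the announced value.
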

\begin{Remark}
\begin{itemize}
    \item Observe that from the Poincaré inequality 
\[
\forall u \in H^1_0(\ell_1,\ell_2),\quad \int_{\ell_1}^{\ell_2} |u|^2 dx \leq \frac{(\ell_2-\ell_1)^2}{\pi^2} \int_{\ell_1}^{\ell_2} |u_x|^2 dx,
\]
a sufficient condition for $A_0 > 0$ is
\[
\frac{(1-\ell_1/\ell_2)^2}{(\ell_1/\ell_2)^2} < \frac{\pi^2}{\ell_2^2 + 1/2}.
\]
For fixed $\ell_2$ the above holds for $\ell_1/\ell_2$ close enough to $1$. 
\item The proof of Theorem \ref{theo:bessel} can be adapted to cover the case $A_0 \geq 0$. One removes the pole of $\mathbf{H}$ at $s = 0$ by considering $\dot{y}$ instead of $y$. 
\end{itemize}
\end{Remark}
\section{Finite dimensional systems}
In all this section we consider a finite dimensional LTI system $\Sigma$ described by \eqref{eq:equations_LTI} with \eqref{eq:dimensions}. We denote by $\mathbb{F}$ its input-output map. 
\subsection{Literature}\label{sec:literature}
Without loss of generality we keep assuming that $\opIm[C \mathfrak{C} | D] = \mathbb{C}^q$. In \cite[Theorem 1]{mesarovic} the authors study the \textit{functional reproducibility} of \eqref{eq:equations_LTI} with $D=0$, and claim to characterize the latter property by the surjectivity of a certain matrix built from $(A,B,C)$. With their definition of functional reproducibility, their Theorem 1 is wrong\footnote{From their Definition 2 and using our notations, if \eqref{eq:equations_LTI} is functionally reproducible then signals $\varphi \in C^d([0,\tau];\mathbb{C}^q)$ close enough in $C^d$ to an uncontrolled output of \eqref{eq:equations_LTI} (with $x(0) \in \mathbb{C}^d$ not necessarily zero and $u(t) \equiv 0$), satisfies $\varphi(0) \in \opIm CB$. Since their criterion of \cite[Theorem 1]{mesarovic} does not entail $CB$ to be surjective, this is a contradiction.}. Despite its importance, this work remains unclear\footnote{We do not understand the end of the proof \cite[Theorem 1]{mesarovic}, at the bottom of p. 558. It is not clear to us why (using their notations) $\| E \|_p < \infty$ entails that $\mathbf{X}'(t)$ solving (28) is free of impulse.} to us and we wish to clarify this. As we shall see, the essence of \cite[Theorem 1]{mesarovic} is true.

The subsequent work \cite{sain_massey} deals with the left-invertibility of LTI systems \eqref{eq:equations_LTI}, with general $D$. For $L \in \mathbb{N}$ we say that $\Sigma$ has an $L$-integral inverse if there is another (finite-dimensional) LTI system $\Sigma'$ whose transfer function $\mathbf{G}$ satisfies 
\[
\mathbf{G}(s)\mathbf{H}(s) = \frac{1}{s^L} I_p.
\]
If such a system $\Sigma'$ exists, the rational matrix $\mathbf{H}$ has the left-inverse $s^L\mathbf{G}(s)$, which encodes the action of the system $\Sigma'$ followed by $L$ successive time derivations. From the viewpoint of matrix algebra, it should be noticed that $s^L\mathbf{G}(s)$ is not any rational matrix. A close inspection of \cite{sain_massey} shows that such $L$ and $\mathbf{G}$ exist if and only if $\mathbf{H}$ is injective, and by transposition one obtains the criterion of \cite[Theorem 1]{mesarovic} when $D = 0$. The paper \cite{sain_massey} further gives an algorithm to compute the smallest $L$ (if exists) such that a system has an $L$-integral inverse, and show that in the positive case one may always take $L \leq d$. The work \cite{willsky} improves this to $L \leq d-\dim \opker D +1$.

The interested reader may further consult \cite{hautus} (see also \cite[\S 8.1-8.2]{trentelman}), which study the mapping $\mathbb{F}$ over the ring of impulsive-smooth distributions and show that $\mathbb{F}$ is surjective over that ring if and only if the transfer function is surjective (see \cite[Theorem 8.13]{trentelman}). This has the notable consequence that, provided the transfer function is surjective, any smooth signal can be realised as an output of the system, the caveat being that the control may contain impulsive terms, of the form $\delta^{(k)}(t)$. 

We also refer to \cite{tucsnak_rissel}, which studies the tracking problem for quadratic systems, and as a preliminary result shows that when $D = 0$ and $C = I_d$, the operator $\mathbb{F} : L^2(0,\tau;\mathbb{C}^p) \to L^2(0,\tau;\mathbb{C}^q)$ has dense range if and only if $B$ is surjective. 

Finally we mention that for SISO systems, \textit{i.e.} systems for which $p=q=1$, in controller form, the image of $\mathbb{F}$ has been characterized in \cite[Theorem 5.1]{zzz25}. 
\subsection{Some elementary facts}
In this section we show Propositions \ref{prop:exact_tracking_EDO} and \ref{prop:tracking_MISO}.
\begin{proof}[Proof of Proposition \ref{prop:exact_tracking_EDO}]
We start with the converse direction, assume that $D$ is surjective and let $R : \mathbb{C}^q \to \mathbb{C}^p$ be a right inverse of $D$. We parametrize the control law $u$ by $Rv$ with $v \in L^2(0,\tau; \mathbb{C}^q)$, the generated output $y$ writes 
\[
y(t) = \int_0^t Ce^{(t-\sigma)A}BRv(\sigma)d\sigma + v(t). 
\]
Let $\mathcal{V}$ be the above Volterra integral operator on $L^2(0,\tau; \mathbb{C}^q)$, it makes a compact operator. Thus, the operator $I + \mathcal{V}$ satisfies the Fredholm alternative, and it is injective if and only if surjective. Using the Banach fixed point theorem one shows that $I + \mathcal{V}$ is injective, hence surjective, whence the claim. 
\newline
\newline
For the direct implication we reason by contraposal and assume that $D$ is not surjective. In particular, $\opIm D$ is a strict subspace of $\mathbb{C}^q$, let $V$ be a linear supplement and $\pi : \mathbb{C}^q \to V$ be the projection onto $V$ parallel to $\opIm D$. For $u \in L^2(0,\tau;\mathbb{C}^p)$ we have 
\[
\pi y(t) = \int_0^t \pi Ce^{(t-\sigma)A}Bu(\sigma)d\sigma,
\]
which is of class $H^1_{(0)}(0,\tau;V)$. The signals $y$ such that $\pi y$ does not lie in the latter class cannot be reached, hence the claim. 
\end{proof}
\begin{proof}[Proof of Proposition \ref{prop:tracking_MISO}]
The case $D \neq 0$ is contained in Proposition \ref{prop:exact_tracking_EDO}. Let us then assume that $D = 0$ but $CB \neq 0$, the general case following by induction. For any $u \in L^2(0,\tau;\mathbb{C}^p)$ we have
\begin{align*}
\dot{y}(t) &= \frac{d}{dt}\int_0^t Ce^{(t-\sigma)A}Bu(\sigma)d\sigma \\
&= CBu(t) + \int_0^t CAe^{(t-\sigma)A}Bu(\sigma)d\sigma, 
\end{align*}
in $\mathcal{D}'(0,\tau)$. We deduce that $y \in H^1_{(0)}(0,\tau)$, and Proposition \ref{prop:exact_tracking_EDO} shows that $\mathcal{Y}(0,\tau) = H^1_{(0)}(0,\tau)$.
\end{proof}
The number $\nu$ is the order of the zero of $t \mapsto Ce^{tA}B$ at the origin, the larger this number is the smoother the output signals are. This reasoning can be generalized as follows: assume that $y(t)$ is given by 
\[
y(t) = \int_0^t k(t-\sigma)u(\sigma)d\sigma,
\]
with $k \in C^\infty_{(0)}([0,\tau] ; \mathcal{M}_{q,p}(\mathbb{C}))$. Then, $\mathcal{Y}(0,\tau) \subset C_{(0)}^\infty([0,\tau];\mathbb{C}^q)$. Of course, if the kernel $k$ further belongs to a Denjoy-Carleman (or ultra-differentiable) class over $[0,\tau]$, then one can deduce similar properties for the outputs. 

\subsection{Proof of the main result}\label{sec:app_L2}
In this \S \ we show Proposition \ref{prop:dense_range}, we begin by a result concerning the injectivity of $\mathbb{F}$.
\begin{Proposition}\label{prop:into_toeplitz_finite_dim}
The following assertions are equivalent to each others. 
\begin{enumerate}
\item $\mathbb{F}$ is injective $L^2(0,\infty; \mathbb{C}^p) \to L^2_{\oploc}([0,\infty);\mathbb{C}^q)$.
\item $\mathbf{H}$ is an injective rational matrix.
\item The matrix 
\[ N_D := 
\left( \begin{array}{cccc}
    D & 0 & \dots & 0 \\
    CB & D & \dots & 0 \\
    CAB & CB & \dots & 0 \\
    \vdots & \vdots & & \vdots \\
    CA^{d-1}B & CA^{d-2}B & \dots & D \\
    CA^dB & CA^{d-1}B & \dots & CB \\
    \vdots & \vdots & & \vdots \\
    CA^{2d-1}B & CA^{2d-2}B & \dots & CA^{d-1}B
\end{array} \right),
\]
is injective. 
    \item $\mathbf{H}$ has the left-inverse $s^L \mathbf{G}(s)$ for some $L \in \mathbb{N}$ and $\mathbf{G}$ the transfer function of a finite dimensional LTI system.
    \item There holds $p \leq d$ and the previous assertion holds with $L = \min(d-\dim \opker D+1,d)$.
\end{enumerate}
\end{Proposition}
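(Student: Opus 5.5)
I would prove the cycle $1 \Leftrightarrow 2 \Leftrightarrow 3$ and then $2 \Rightarrow 5 \Rightarrow 4 \Rightarrow 1$, using the Laplace transform as the bridge between time domain and rational matrices.

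\emph{Step 1: $1 \Leftrightarrow 2$.} For $u \in L^2(0,\infty;\mathbb{C}^p)$ we have $\hat y(s) = \mathbf{H}(s)\hat u(s)$ for $\opRe s$ large, and $\hat u$ is holomorphic on the right half plane. Since the Laplace transform is injective, $\mathbb{F}u = 0$ iff $\mathbf{H}\hat u \equiv 0$ there.

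If $\mathbf{H}$ is injective over $\mathbb{C}(s)$, pick a $p\times p$ minor that is not identically zero. It vanishes only at finitely many points, so $\hat u = 0$ on an open set and hence $u = 0$.

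Conversely, if $\mathbf{H}$ is not injective, take a nonzero polynomial vector $P(s)$ with $\mathbf{H}P = 0$; it exists by clearing denominators of a rational kernel vector. Set $\hat u(s) = P(s)/(s+1)^{N}$ with $N > \deg P + 1$. This is the Laplace transform of a nonzero $L^2(0,\infty)$ function, namely a combination of $t^k e^{-t}$, and $\mathbb{F}u = 0$.

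\emph{Step 2: $2 \Leftrightarrow 3$.} Expand at infinity: $\mathbf{H}(s) = D + \sum_{k\ge 0} CA^kB\, s^{-k-1}$. The matrix $N_D$ is the block Toeplitz matrix of the Markov parameters truncated to $2d$ block rows and $d$ block columns. Injectivity of $\mathbf{H}$ over $\mathbb{C}(s)$ amounts to the absence of a nonzero power series kernel vector $v(s^{-1})$, which in turn is equivalent to injectivity of the infinite Toeplitz operator.

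The finite truncation suffices for two reasons. By Cayley–Hamilton, $CA^kB$ for $k \geq d$ is a linear combination of the first $d$ Markov parameters. A kernel vector of $\mathbf{H}$ can also be chosen polynomial of degree $\leq d-1$ in the relevant variable, by a McMillan degree / minimal polynomial basis argument: the minimal indices of the kernel sum to at most $d$.

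This is the step I expect to be the main obstacle. One must show that if the infinite Toeplitz matrix has a kernel vector, then so does the $2d\times d$ truncation, and conversely that a kernel vector of the truncation propagates to the infinite one. I would try an explicit realization argument. Given $(v_0,\dots,v_{d-1})$ in the kernel of $N_D$, define the state trajectory $x_j = \sum_{i\le j} A^{j-i}Bv_i$. The $2d$ vanishing output equations together with Cayley–Hamilton then force the continued outputs to vanish, yielding a rational kernel vector of $\mathbf{H}$.

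\emph{Step 3: $2 \Rightarrow 5 \Rightarrow 4 \Rightarrow 1$.} For $2 \Rightarrow 5$, note first that an injective $\mathbf{H}$ needs $p \leq$ the rank of $\mathbf{H}$. Writing $\mathbf{H} - D = C(s-A)^{-1}B$, which has rank $\leq d$, and splitting $\mathbb{C}^p = \ker D \oplus$ a complement shows $\dim\ker D \leq d$ and $p \leq d$ after the usual reduction.

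Next, invoke the structure algorithm of Sain–Massey as refined by Willsky. Repeatedly differentiate the output and, at each stage, eliminate the components of $u$ that are not yet determined. This produces a left inverse system $\Sigma'$ with $\mathbf{G}\mathbf{H} = s^{-L}I_p$. Each step raises the rank of the stacked "$D$-matrix" by at least one while injectivity prevents stalling, so at most $d - \dim\ker D + 1$ differentiations are needed. I would cite this count from \cite{willsky}, with the bound $L \le d$ coming from \cite{sain_massey}.

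The implication $5 \Rightarrow 4$ is trivial. Finally, $4 \Rightarrow 1$ (or directly $4 \Rightarrow 2$): $s^L\mathbf{G}\mathbf{H} = I_p$ forces $\mathbf{H}$ to be injective as a rational matrix, and Step 1 then gives injectivity of $\mathbb{F}$.
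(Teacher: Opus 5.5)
\paragraph{Overall.} Your Step 1 is essentially the paper's argument. The paper proves $1\Rightarrow 2$ via the multiplication operator $f\mapsto \mathbf{H}f$ between Hardy spaces, with the same $p(s)/(1+s)^n$ normalisation. For $2\Leftrightarrow 4\Leftrightarrow 5$ you, like the paper, rely on Sain--Massey and Willsky. For $2\Leftrightarrow 3$ the paper only cites Sain--Massey, so your Markov-parameter argument is a more self-contained route.

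\paragraph{The gap in Step 2.} As written, Step 2 has an off-by-one error that breaks the direction ``$\mathbf{H}$ not injective $\Rightarrow$ $N_D$ not injective''. The claim that a kernel vector of $\mathbf{H}$ can be taken polynomial of degree $\le d-1$ is false. Your own justification (right minimal indices sum to at most the McMillan degree, which is $\le d$) only gives degree $\le d$, and that bound is sharp. Take $d=1$, $A=0$, $B=(1\ \ 0)$, $C=1$, $D=(0\ \ 1)$, so that $\mathbf{H}(s)=(1/s\ \ 1)$. This matrix is $1\times 2$ and hence not injective. Its kernel is spanned by $(1,-z)^T$ with $z=1/s$ and contains no nonzero constant vector. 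Yet your $2d\times d$ block truncation $\begin{pmatrix} D\\ CB\end{pmatrix}=\begin{pmatrix}0&1\\1&0\end{pmatrix}$ is invertible. So with your reading of $N_D$, the equivalence $2\Leftrightarrow 3$ is actually false.

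\paragraph{The repair.} Read the display more carefully. The row ending in $D$ starts with $CA^{d-1}B$, and the last row is $CA^{2d-1}B,\dots,CA^{d-1}B$. So $N_D$ has $d+1$ block columns and $2d+1$ block rows: it records the coefficients of $z^0,\dots,z^{2d}$ in $\mathbf{H}(1/z)v(z)$ for $v(z)=\sum_{j=0}^{d}v_jz^j$. With this reading both directions go through.
\begin{itemize}
\item A polynomial kernel vector of degree $\le d$, which exists by the minimal-index bound you quote, gives a nonzero element of $\opker N_D$.
\item Conversely, let $(v_0,\dots,v_d)\in\opker N_D$. For $k\ge d+1$ the coefficient of $z^k$ equals $CA^{k-d-1}x$ with $x=\sum_{j=0}^{d}A^{d-j}Bv_j$. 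The last $d$ block rows give $CA^mx=0$ for $0\le m\le d-1$, and Cayley--Hamilton makes all further coefficients vanish. Hence $v(1/s)$ is a nonzero rational kernel vector of $\mathbf{H}$. This is exactly your realization argument with the indices shifted by one.
\end{itemize}

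\paragraph{The clause $p\le d$ in Step 3.} Your phrase ``$p\le d$ after the usual reduction'' cannot be turned into a proof. For $D=I_p$ with $p>d$, $\mathbf{H}(s)\to I_p$ at infinity, so $\mathbf{H}$ is injective. What injectivity actually yields is $\dim\opker D\le d$, which your splitting argument does show. The clause $p\le d$ in item 5 is automatic when $D=0$ (the Sain--Massey setting) but not in general. The paper gives no argument for it beyond the citation.
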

Prior to give the proof we set up some notations. For $\sigma \in \mathbb{R}$ we let $L^2_\sigma(0,\infty)$ be the Hilbert space with norm 
\[
\| f \|^2 = \int_0^\infty |e^{-\sigma t} f(t)|^2 dt.
\]
We let $\gamma = \gamma(\mathbb{F}) \in [-\infty,\omega_0(e^{tA})]$ be such that 
\[
\forall \gamma < \sigma < \infty,\quad \forall u \in L^2(0,\infty;\mathbb{C}^p), \quad \int_0^\infty \| e^{-\sigma t} y(t) \|^2 dt < \infty,
\]
see \cite[Theorem 4.1]{weiss_transfer}. The operator $\mathbb{F}$ is thus bounded 
\[
    L^2(0,\infty;\mathbb{C}^p) \to L^2_{\sigma}(0,\infty;\mathbb{C}^q),\quad L^2_{\oploc}([0,\infty); \mathbb{C}^p) \to L^2_{\oploc}([0,\infty);\mathbb{C}^q),
\]
for all $\gamma < \sigma < \infty$. Moreover, for $u \in L^2(0,\infty;\mathbb{C}^p)$, the function $y$ is Laplace transformable with abscissa of absolute convergence $\leq \gamma$, and we have 
\begin{equation} \label{eq:transfer_Laplace}
    \hat{y}(s) = \mathbf{H}(s) \hat{u}(s),\quad \opRe s > \gamma.
\end{equation}
For $n \in \mathbb{N}$ and $\alpha \in \mathbb{R}$ denote $H^2(\mathbb{C}_\alpha ; \mathbb{C}^n)$ the set of functions $F = \mathbb{C}_\alpha \to \mathbb{C}^n$ such that each component lies in the Hardy space $H^2(\mathbb{C}_\alpha)$. Note that a function $f : \mathbb{C}_\alpha \to \mathbb{C}$ lies in $H^2(\mathbb{C}_\alpha)$ if and only if $f(\cdot - \alpha) \in H^2(\mathbb{C}_0)$, hence the Paley-Wiener theorem implies that the Laplace transform is an isomorphism $L^2_\alpha(0,\infty;\mathbb{C}^n) \to H^2(\mathbb{C}_\alpha ; \mathbb{C}^n)$. For the Paley-Wiener theorem and Hardy spaces we refer to \cite{rudin,koosis}.
\begin{proof}
Let us show that $1 \Longrightarrow 2$. Assume that $\mathbb{F}$ is injective $L^2(0,\infty; \mathbb{C}^p) \to L^2_{\oploc}([0,\infty);\mathbb{C}^q)$ and let $\gamma < \sigma < \infty$. Since $\mathbb{F}$ is bounded $L^2(0,\infty;\mathbb{C}^p) \to L^2_{\sigma}(0,\infty;\mathbb{C}^q)$, from \eqref{eq:transfer_Laplace} one sees that the associated Toeplitz operator 
\[
\mathscr{T} : H^2(\mathbb{C}_0;\mathbb{C}^p) \to H^2(\mathbb{C}_\sigma;\mathbb{C}^q),\quad \mathscr{T}f(s) = \mathbf{H}(s) f(s),
\]
is bounded and injective by the hypothesis. Let $f \in \mathbb{C}(s)^p$ be such that $\mathbf{H}f = 0$. Put 
\[
\tilde{f}(s) = \frac{p(s)}{(1+s)^n}f(s),\quad p \in \mathbb{C}[s],
\]
with $p \not \equiv 0$ removing the poles of $f$ and $n$ large enough so that $\tilde{f} \in H^2(\mathbb{C}_0;\mathbb{C}^p)$. Then
\[
0 = \frac{p(s)}{(1+s)^n} \mathbf{H}(s)f(s) = \mathbf{H}(s) \frac{p(s)}{(1+s)^n}f(s) = (\mathscr{T} \tilde{f})(s),
\]
hence $\tilde{f} \in \opker \mathscr{T}$. By assumption we deduce successively that $\tilde{f} \equiv 0$ and $f \equiv 0$. 
\newline
\newline
The converse implication $2 \Longrightarrow 1$ is shown in a similar way, we omit the details. The equivalence $2 \Longleftrightarrow 4$ is from \cite[Corollary 2]{sain_massey}. The equivalence $3 \Longleftrightarrow 4$ is from \cite[Theorem 3]{sain_massey}. The equivalence $4 \Longleftrightarrow 5$ is from \cite[Corollary 1]{willsky} and \cite[Corollary 1]{sain_massey}. 
\end{proof}
To prove Proposition \ref{prop:dense_range} we rely on a Lemma. We define for $L \in \mathbb{N}$ and $\alpha \in \mathbb{R}$ the set
\[
\dot{H}_{0,\alpha}^L(0,\infty) = \{ f \in H^L_{0,\oploc}[0,\infty) : f^{(L)} \in L^2_\alpha(0,\infty) \}.
\]
\begin{Lemma}\label{lem:inv_order} Let $\alpha >0$ and $L \in \mathbb{N}$. If $\mathbf{H}$ has a right inverse $\mathbf{G}$ free of pole on $\{ \opRe s \geq \alpha \}$ and such that 
\[
\| \mathbf{G}(s) \| \lesssim |s|^L,\quad |s| \to \infty,
\]
then we have the inclusion 
\begin{equation}\label{eq:incl_sobo}
    \dot{H}^L_{0,\alpha}(0,\infty;\mathbb{C}^q) \subset \mathbb{F} L^2_\alpha (0,\infty;\mathbb{C}^p).
\end{equation}
\end{Lemma}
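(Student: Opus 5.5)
Fix $y \in \dot{H}^L_{0,\alpha}(0,\infty;\mathbb{C}^q)$. The plan is to work entirely on the Laplace side. We look for $u \in L^2_\alpha(0,\infty;\mathbb{C}^p)$ with $\hat u = \mathbf{G}\hat y$, and then check that $\mathbb{F}u = y$ via \eqref{eq:transfer_Laplace} and injectivity of the Laplace transform.

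\emph{Step 1: Laplace transform of $y$.} Since $y(0)=\dots=y^{(L-1)}(0)=0$ and $y^{(L)} \in L^2_\alpha$, each $y^{(k)}$ with $k<L$ is obtained from $y^{(L)}$ by $L-k$ successive integrations from $0$. Each integration maps $L^2_\alpha$ into $L^2_{\alpha'}$ for any $\alpha'>\alpha>0$, and a bit more precisely $|y(t)| \lesssim e^{\alpha t}t^{L-1/2}$ by Cauchy--Schwarz. Hence $y$ is Laplace transformable on $\opRe s>\alpha$. Integrating by parts $L$ times, with no boundary terms because of the zero initial data, gives
\[
\widehat{y^{(L)}}(s) = s^L \hat y(s),\quad \opRe s > \alpha .
\]
By the Paley--Wiener theorem recalled above, $\widehat{y^{(L)}} \in H^2(\mathbb{C}_\alpha;\mathbb{C}^q)$.

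\emph{Step 2: definition of $u$.} Set
\[
F(s) := \mathbf{G}(s)\hat y(s) = \bigl(s^{-L}\mathbf{G}(s)\bigr)\,\widehat{y^{(L)}}(s).
\]
The matrix $s^{-L}\mathbf{G}(s)$ is rational and has no pole on $\{\opRe s\geq \alpha\}$; here $\alpha>0$ guarantees that $s^{-L}$ contributes no pole. By hypothesis it is $O(1)$ as $|s|\to\infty$. A rational function that is pole-free on a closed half-plane and bounded at infinity is bounded on that half-plane, so $s^{-L}\mathbf{G}\in H^\infty(\mathbb{C}_\alpha)$ entrywise. Multiplication by an $H^\infty$ function preserves $H^2(\mathbb{C}_\alpha)$, hence $F \in H^2(\mathbb{C}_\alpha;\mathbb{C}^p)$. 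Paley--Wiener then gives a unique $u \in L^2_\alpha(0,\infty;\mathbb{C}^p)$ with $\hat u = F$.

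\emph{Step 3: $\mathbb{F}u=y$.} This is the delicate step, because \eqref{eq:transfer_Laplace} was stated for $u\in L^2(0,\infty)$, whereas here $u \in L^2_\alpha$. To get around this, I would use the explicit formula \eqref{eq:repr_y}. For $\sigma$ larger than both $\alpha$ and $\omega_0(e^{tA})$, the convolution kernel $Ce^{tA}B$ lies in $L^1_\sigma$ and $u\in L^2_\sigma$. The convolution theorem for Laplace transforms therefore gives $\widehat{\mathbb{F}u}(s) = \mathbf{H}(s)\hat u(s)$ for $\opRe s>\sigma$, with $\mathbb{F}u \in L^2_\sigma$. (Alternatively, by causality one could truncate $u$ to $[0,T]$, apply \eqref{eq:transfer_Laplace}, and let $T\to\infty$.) Then
\[
\widehat{\mathbb{F}u}(s) = \mathbf{H}(s)\mathbf{G}(s)\hat y(s) = \hat y(s),\quad \opRe s>\sigma,
\]
and since both $\mathbb{F}u$ and $y$ lie in $L^2_\sigma$, uniqueness of the Laplace transform yields $\mathbb{F}u = y$. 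Justifying the transfer identity for exponentially growing inputs is the main obstacle; the rest is routine Hardy-space bookkeeping.
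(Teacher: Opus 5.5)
Your proposal is correct and follows the paper's proof: you set $\hat u = \mathbf{G}\hat y = s^{-L}\mathbf{G}\,\widehat{y^{(L)}}$, use $\alpha>0$ to get that $s^{-L}\mathbf{G}$ is bounded on $\mathbb{C}_\alpha$, and conclude with the Paley--Wiener theorem. Your Steps 1 and 3 spell out what the paper leaves implicit in ``it is enough to show''. These are the identity $\widehat{y^{(L)}} = s^L\hat y$ on $\opRe s>\alpha$, and the check that $\mathbb{F}u=y$ for the exponentially weighted input via the convolution theorem and uniqueness of the Laplace transform.
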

\begin{proof}
For $y \in \dot{H}_{0,\alpha}^L(0,\infty)$ we consider the control law $u$ defined formally by 
\[
\hat{u}(s) = \mathbf{G}(s) \hat{y}(s).
\]
To conclude, from the Paley-Wiener theorem it is enough to show that the right hand side is in $H^2(\mathbb{C}_\alpha;\mathbb{C}^p)$. The function $\mathbf{G}\hat{y}$ is holomorphic $\mathbb{C}_\alpha  \to \mathbb{C}^p$ and 
\[
\mathbf{G}(s) \hat{y}(s) = \mathbf{G}(s) \frac{1}{s^L} \widehat{y^{(L)}}(s) = \frac{1}{s^L}\mathbf{G}(s) \widehat{y^{(L)}}(s),\quad \opRe s > 0. 
\]
As $\alpha > 0$ the rational matrix $\mathbf{G}(s)/s^L$ is bounded on $\mathbb{C}_\alpha$, hence the claim. 
\end{proof}
\begin{proof}[Proof of Proposition \ref{prop:dense_range}]
The implications $1 \Longleftrightarrow 2 \Longrightarrow 3$ and $8 \Longrightarrow 7 \Longrightarrow 2$ are trivial. We begin by showing that $3 \Longrightarrow 2$, so that in particular $1-3$ are equivalent. Let $0 < \tau < \tau' < \infty$ and assume that $\mathbb{F} : L^2(0,\tau;\mathbb{C}^p) \to L^2(0,\tau;\mathbb{C}^q)$ has dense range. Let $f \in L^2(0,\tau';\mathbb{C}^q)$ and $\epsilon > 0$, by assumption there exists $u \in L^2(0,\tau;\mathbb{C}^p)$ such that 
\[
\| \mathbb{F}u - f \|_{L^2(0,\tau;\mathbb{C}^q)} < \epsilon. 
\]
We extend $u(t)$ by $0$ for times $\tau < t < \tau'$ and assume to fix the ideas that $\tau' \leq 2 \tau$. Using again the denseness assumption one finds $v \in L^2(0,\tau'-\tau;\mathbb{C}^p)$ such that 
\[
\| \mathbb{F}v + \mathbb{L} \Phi_\tau u - f(\tau+\cdot) \|_{L^2(0,\tau'-\tau;\mathbb{C}^q)} < \epsilon,
\]
where we have introduced two bounded operators 
\[
\Phi_\tau u = \int_0^\tau e^{t-\sigma}Bu(\sigma) d\sigma,\quad L^2(0,\tau;\mathbb{C}^p) \to \mathbb{C}^d,
\]
and 
\[
(\mathbb{L} x)(t) = Ce^{tA}x,\quad \mathbb{C}^d \to L^2_{\oploc}([0,\infty);\mathbb{C}^q). 
\]
The control 
\[
w(t) = \left\lbrace \begin{array}{rl}
    u(t), & 0 < t < \tau,  \\
    v(t-\tau), & \tau < t < \tau', 
\end{array}\right.
\]
then satisfies 
\[
    \| \mathbb{F}w - f \|_{L^2(0,\tau';\mathbb{C}^q)} = \| \mathbb{F}w - f \|_{L^2(0,\tau;\mathbb{C}^q)} + \| \mathbb{F}w - f \|_{L^2(\tau,\tau';\mathbb{C}^q)} 
\]
where 
\[
 \| \mathbb{F}w - f \|_{L^2(0,\tau;\mathbb{C}^q)} = \| \mathbb{F}u - f \|_{L^2(0,\tau;\mathbb{C}^q)} < \epsilon,
\]
and 
\[
(\mathbb{F}w)(t + \tau) = (\mathbb{F}v)(t) + (\mathbb{L} \Phi_\tau u)(t),
\]
see \cite[eq. (1.6)]{weiss_representation}. We deduce that $2$ holds, under the additional assumption $\tau' \leq 2 \tau$. For general $\tau' > \tau$ the result follows by induction. 
\newline
\newline
We show that $3 \Longrightarrow 4$. Assume $3$ and let $0 < \tau < \infty$ be such that $\mathbb{F} : L^2(0,\tau;\mathbb{C}^p) \to L^2(0,\tau;\mathbb{C}^q)$ has dense range. We show that $\mathbf{H}$ is a surjective rational matrix. For this it suffices to show that 
\[
\forall g \in \mathbb{C}(s)^q,\quad g(s)^T\mathbf{H}(s) \equiv 0 \Longrightarrow g(s) \equiv 0.
\]
In fact, it is enough to show the above assertion for $g \in \mathbb{C}[s]^q$. To show this, let $g$ be as such and consider the differential operator 
\[
L : \mathcal{D}'(\mathbb{R};\mathbb{C}^p) \to \mathcal{D}'(\mathbb{R}),\quad L = g(\partial_t)^T,
\]
as well as the convolution kernel 
\[
k(t) = Ce^{tA}B1_{t > 0} + \delta_0(t) D \in \mathcal{D}'(\mathbb{R};\mathcal{M}_{q,p}(\mathbb{C})).
\]
For $u \in C^\infty_c((0,\tau) ; \mathbb{C}^p)$ we have $\mathbb{F}u = k*u$, hence 
\[
L \mathbb{F} u = L(k*u) = (Lk)*u.
\]
The above distribution is Laplace transformable, with abscissa of absolute convergence $\leq \gamma$, hence 
\[
\mathcal{L}(L \mathbb{F} u )(s) = (\mathcal{L}Lk)(s) \hat{u}(s) = g(s)^T\hat{u}(s) = 0 , \quad \opRe s > \gamma.
\]
By uniqueness of the Laplace transform, $L \mathbb{F}u = 0$, and this for all $u \in C^\infty_c((0,\tau) ; \mathbb{C}^p)$. Now for all $u \in C^\infty_c((0,\tau) ; \mathbb{C}^p)$ and $\varphi \in C^\infty_c((0,\tau) ; \mathbb{C}^q)$ we obtain 
\[
0 = \int_0^\tau \langle L \mathbb{F}u , \varphi(t) \rangle_{\mathbb{C}^q}dt = - \int_0^\tau \langle \mathbb{F}u , L^*\varphi(t) \rangle_{\mathbb{C}^q}dt,
\]
with $L^*$ the formal adjoint of $L$. By denseness of the range of $\mathbb{F} : L^2(0,\tau;\mathbb{C}^p) \to L^2(0,\tau;\mathbb{C}^q)$ and the denseness of $C^\infty_c((0,\tau) ; \mathbb{C}^p) \subset \mathbb{F} : L^2(0,\tau;\mathbb{C}^p)$, we deduce that 
\[
\forall \varphi \in C^\infty_c((0,\tau) ; \mathbb{C}^q),\quad L^* \varphi = 0.
\]
Since $L^*$ is a differential operator with constant coefficients, we deduce that $L^* = 0$, hence $L = 0$. We conclude that $g = 0$, hence the claim. 
\newline
\newline
The implication $4 \Longrightarrow 8$ follows from Lemma \ref{lem:inv_order} and Proposition \ref{prop:into_toeplitz_finite_dim} by transposition. The equivalence $4 \Longleftrightarrow 5$ is classical result of linear algebra, see \cite[Exercise 5.48]{abadir}. The equivalences $4 \Longleftrightarrow 6$ follows from Proposition \ref{prop:into_toeplitz_finite_dim} by transposition, which ends the proof. 
\end{proof}
\section{SISO systems}\label{sec:app_SISO}
In this section we prove Theorem \ref{theo:charac_output_cartwright}. As already discussed in the introduction, this result has already been proved in \cite{davron_lissy} using \cite[Lemma 3.4]{foures}, which provides a generalization of the Paley-Wiener Theorem. To enlighten the proof of Theorem \ref{theo:charac_output_cartwright} we begin by recalling and improving the Paley-Wiener Theorem.
\subsection{Paley-Wiener Theorem}
The following result is due to R. Paley and N. Wiener \cite{paley_wiener}. Subsequently, it has been customary to call ``Paley-Wiener Theorem" any result which links the growth of a function and the complex analytic properties its Fourier transform. Most of the references on the topic work with the Fourier transform and holomorphic functions on the upper half plane, to be close to them we work on the upper half plane $\Pi := \{ x+iy : y > 0 \}$ and define the Fourier transform of a function $f : \mathbb{R} \to \mathbb{C}$ (provided this makes sense) as 
\[
\mathcal{F}f(\xi) = \frac{1}{2 \pi} \int_\mathbb{R} e^{ix\xi}f(x)dx,
\]
which is not essential. We identify the set of $L^2(\mathbb{R})$ functions supported on $[0,\infty)$ with $L^2(0,\infty)$. 
\begin{Theorem}{\cite[Theorem 19.2]{rudin}}
The Fourier transform is an isometric isomorphism $L^2(0,\infty) \to H^2(\Pi)$. 
\end{Theorem}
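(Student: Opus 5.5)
We plan to prove the Paley--Wiener theorem in the form stated, with the normalization $\mathcal{F}f(\xi) = \frac{1}{2\pi}\int_\mathbb{R} e^{ix\xi}f(x)dx$. On $\Pi$ we use the norm $\|F\|_{H^2(\Pi)}^2 = \sup_{y>0}\int_\mathbb{R}|F(x+iy)|^2dx$, rescaled by the constant dictated by Plancherel so that the map is an isometry. The proof has three parts: the forward map lands in $H^2(\Pi)$ with the right norm, every $F \in H^2(\Pi)$ has boundary values, and those boundary values are transforms of functions supported on $[0,\infty)$.

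\emph{Forward direction.} For $f \in L^2(0,\infty)$ set $F(z) = \frac{1}{2\pi}\int_0^\infty e^{itz} f(t)\,dt$ for $z \in \Pi$. Since $|e^{itz}| = e^{-ty}$ and $e^{-ty}\in L^2(0,\infty)$ for $y>0$, the integral converges absolutely and locally uniformly in $z$, so $F$ is holomorphic on $\Pi$ by differentiation under the integral (or Morera). For fixed $y$, $F(\cdot+iy)$ is the Fourier transform of $e^{-ty}f(t)$. Plancherel then gives $\int|F(x+iy)|^2dx = c\int_0^\infty e^{-2ty}|f(t)|^2dt$. This is nonincreasing in $y$ and tends to $c\|f\|^2$ as $y\to0^+$ by monotone convergence. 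Hence $F\in H^2(\Pi)$ with $\|F\|_{H^2} = c^{1/2}\|f\|_{L^2}$, and $F(\cdot+iy)\to\mathcal{F}f$ in $L^2(\mathbb{R})$. So the map is an isometry up to the normalizing constant, and its range is closed.

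\emph{Surjectivity.} This is the main obstacle. Given $F\in H^2(\Pi)$, let $F_y = F(\cdot+iy)$ and let $f_y$ be the inverse Fourier transform of $F_y$ on $\mathbb{R}$. The first step is to show that $e^{ty}f_y(t)$ does not depend on $y$. We would do this with Cauchy's theorem on rectangles $[-A,A]\times[y_1,y_2]$. The vertical sides are handled by choosing a sequence $A_n\to\infty$ along which $\int_{y_1}^{y_2}|F(\pm A_n+iy)|dy\to0$, which is possible because $\int_{y_1}^{y_2}\int_\mathbb{R}|F|^2dx\,dy<\infty$. This is applied to $F(z)e^{-itz}$ tested against Schwartz functions, and it yields a single function $f$ with $f_y(t) = e^{-ty}f(t)$. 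Plancherel then gives $\int e^{-2ty}|f(t)|^2dt \le C\|F\|^2_{H^2}$ for every $y>0$. Letting $y\to\infty$ forces $f=0$ a.e. on $(-\infty,0)$, since $e^{-2ty}\to\infty$ there. Letting $y\to0$ gives $f\in L^2(0,\infty)$ by monotone convergence. Finally, $F$ is the transform of $f$: both sides agree on each horizontal line by construction, so the forward formula applied to $f$ returns $F$.

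An alternative route to surjectivity, if the contour argument becomes messy, is to use closedness of the range (already established) and show that its orthogonal complement in $H^2(\Pi)$ is trivial. The kernel functions $(z-\bar w)^{-1}$, $w\in\Pi$, are transforms of $e^{-i\bar w t}\mathbf 1_{t>0}$ up to constants, and they reproduce point values in $H^2(\Pi)$. An $F$ orthogonal to the range is orthogonal to all of them, hence vanishes everywhere. That reproducing property itself needs a Cauchy-integral argument, so the difficulty is moved rather than removed. I would therefore try the direct contour argument first.
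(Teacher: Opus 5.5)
Your proposal is correct and is essentially the standard proof of Rudin's Theorem 19.2, which the paper cites rather than proves. The steps match: Plancherel on horizontal lines for the forward map, then Cauchy's theorem on rectangles with abscissae $\pm A_n$ chosen via Fubini so that the vertical sides vanish, yielding $f_y(t)=e^{-ty}f(t)$, followed by the limits $y\to\infty$ and $y\to 0^+$.

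The one detail to tidy is the phrase ``tested against Schwartz functions''. For a Schwartz $\psi$ the function $e^{ty}\psi(t)$ need not be square integrable, so either test against $C^\infty_c(\mathbb{R})$ functions, or argue pointwise in $t$ as Rudin does. The pointwise argument uses that $(A_n)$ does not depend on $t$ and that the truncated integrals $\int_{-A_n}^{A_n}F_y(x)e^{-itx}\,dx$ converge in $L^2(dt)$, hence almost everywhere along a subsequence.
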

For the theory of Hardy spaces see \cite{rudin,koosis}. Recall that $H^2(\Pi)$ is defined as the set of these holomorphic functions $F : \Pi \to \mathbb{C}$ satisfying the growth bound
\begin{equation}\label{eq:def_hardy}
    \| F \|_{H^2(\Pi)}^2 := \sup_{y > 0} \int_\mathbb{R} |F(x+iy)|^2 dx < \infty.
\end{equation}
Such an $F(x+iy)$ has an $L^2$ boundary value as $y \to 0^+$, which we abusively denote $F(x)$, \textit{viz.}
\[
F(x+iy) \xrightarrow[y \to 0^+]{L^2(\mathbb{R},dx)} F(x),
\]
see \textit{e.g.} \cite[Theorem 7.2]{katznelson}. Moreover, we have $f = \mathcal{F}^{-1}[F(x)]$, from which one deduces
\begin{equation}\label{eq:inf_spectrum}
    \inf \opsupp f = - \limsup_{y \to \infty} \frac{1}{y} \log | F(iy)|,
\end{equation}
see the proof of \cite[Theorem, p. 179]{koosis}. 
For $F \in H^2(\Pi)$, we have in particular
\begin{equation}\label{eq:spec_0}
    \limsup_{y \to \infty} \frac{1}{y} \log | F(iy)| \leq 0.
\end{equation}
To go further we introduce a class of holomorphic functions on $\Pi$. 
\begin{Definition}
A holomorphic function $F : \Pi \to \mathbb{C}$ is said to belong to the Cartwright class $\mathscr{C}(\Pi)$ if it is continuous up to the real axis, satisfies 
\[
\exists C > 0,\quad \forall z \in \Pi,\quad |F(z)| \leq Ce^{C|z|},
\]
and 
\begin{equation}\label{eq:log_int}
    \int_\mathbb{R} \frac{\log^+ |F(x)|}{1+x^2}dx < \infty. 
\end{equation}
\end{Definition}
\begin{Remark}\label{rem:cart_hardy}
Essentially, the Cartwright class is larger than $H^2(\Pi)$. This is not true because $H^2(\Pi)$ functions do not necessarily possess a continuous boundary value, but this can be circumvented replacing $F \in H^2(\Pi)$ by $F(x + iy + i\epsilon)$ for $\epsilon > 0$, which turns out to belong to $\mathscr{C}(\Pi)$ (see \cite[Lemma, p. 149]{koosis}). 
\end{Remark}
Functions of the Cartwright class bear many useful properties, the first of which being 
\begin{equation}\label{eq:growth_cart}
    \log | F(z) | \leq Ay + \frac{y}{\pi} \int_\mathbb{R}  \frac{\log |F(t)|}{|z-t|^2}dt,\quad \forall z = x+iy \in \Pi,\quad A := \limsup_{y \to \infty} \frac{1}{y} \log | F(iy)|,
\end{equation}
see \cite[Theorem, \S III.G.2, p. 51]{K1}.
\newline
\newline
Observe that in \eqref{eq:inf_spectrum}, for the left term to be defined one has to make sense of $f = \mathcal{F}^{-1}[F(x)]$. This is tedious when $F \in \mathscr{C}(\Pi)$, as the boundary value satisfies \eqref{eq:log_int}, and in general not better. However, the right term of \eqref{eq:inf_spectrum} is well-defined when $F \in \mathscr{C}(\Pi)$. Below is the claimed generalization of the Paley-Wiener Theorem. 
\begin{Lemma}\label{lem:pwc}
Let $F \in \mathscr{C}(\Pi)$. Then $F \in H^2(\Pi)$ if and only if it satisfies both \eqref{eq:spec_0} and $F \in L^2(\mathbb{R})$. 
\end{Lemma}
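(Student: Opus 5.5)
The direct implication is immediate. If $F \in H^2(\Pi)$, then \eqref{eq:spec_0} has already been recorded as a consequence of \eqref{eq:inf_spectrum}. Moreover, $F$ is continuous up to the real axis, and the $L^2$ boundary value of $F(x+iy)$ as $y\to 0^+$ coincides almost everywhere with the pointwise limit $F(x)$: a subsequence converges almost everywhere. Fatou's lemma and \eqref{eq:def_hardy} then give $F \in L^2(\mathbb{R})$.

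For the converse, let $F \in \mathscr{C}(\Pi)$ satisfy \eqref{eq:spec_0} and $F|_\mathbb{R} \in L^2(\mathbb{R})$. The plan is to use the Cartwright growth estimate \eqref{eq:growth_cart}. Since \eqref{eq:spec_0} says exactly that $A \leq 0$, we obtain for $z=x+iy\in\Pi$
\[
\log|F(z)| \leq \frac{y}{\pi}\int_\mathbb{R} \frac{\log|F(t)|}{|z-t|^2}dt = P_y * \log|F|\,(x),
\]
where $P_y(x) = \frac{y}{\pi(x^2+y^2)}$ is the Poisson kernel of $\Pi$, a probability density. Applying Jensen's inequality to the convex function $\exp(2\,\cdot)$ gives
\[
|F(x+iy)|^2 \leq \exp\big(P_y * \log|F|^2\,(x)\big) \leq P_y * |F|^2\,(x).
\]
Integrating in $x$ and using Fubini with $\int P_y = 1$ yields $\int_\mathbb{R}|F(x+iy)|^2dx \leq \|F\|_{L^2(\mathbb{R})}^2$ for every $y>0$, so $F \in H^2(\Pi)$ with $\|F\|_{H^2(\Pi)} \leq \|F\|_{L^2(\mathbb{R})}$.

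The main obstacle is justifying these manipulations when $\log|F(t)|$ is not integrable against the Poisson kernel. The positive part is fine: by \eqref{eq:log_int} the integral $\int \log^+|F|\,P_y$ is finite. The negative part may diverge to $-\infty$, in which case the right-hand side of \eqref{eq:growth_cart} is $-\infty$, and I must make sure the inequality is still meaningful and that Jensen applies. For Jensen with values in $[-\infty,\infty)$ and a decreasing-tail function, I would truncate by replacing $\log|F|$ with $\max(\log|F|,-M)$. This only increases the right-hand side, so the bound $\log|F(z)| \leq P_y*\max(\log|F|,-M)$ still holds. Jensen then gives $|F(z)|^2 \leq P_y*\max(|F|^2,e^{-2M})$, whose $x$-integral is unfortunately infinite. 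To fix this, I would instead use the bound $|F|^2 \leq \max(|F|^2, e^{-2M}) $ only inside Jensen pointwise, and then let $M\to\infty$ by monotone convergence in $P_y * \max(|F|^2,e^{-2M})(x) \downarrow P_y*|F|^2(x)$, which is finite for each $x$ because $|F|^2\in L^1$ and $P_y$ is bounded. This gives the pointwise bound $|F(x+iy)|^2 \leq P_y*|F|^2(x)$, after which the integration in $x$ proceeds as above. The case where $F$ has zeros on the real line needs no separate treatment, since the convention $\log 0=-\infty$ is handled by the same truncation. Finally, I would double-check that \eqref{eq:growth_cart} as quoted from \cite{K1} applies under the paper's definition of $\mathscr{C}(\Pi)$, which requires continuity to the boundary and exponential type.
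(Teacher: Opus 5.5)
Your proof is correct and follows the paper's argument. You use \eqref{eq:growth_cart} with $A\le 0$, then Jensen's inequality for $\lambda\mapsto e^{2\lambda}$ against the Poisson probability measure to get $|F(x+iy)|^2\le P_y*|F|^2(x)$, and then integrate in $x$. Your truncation $\max(\log|F|,-M)$ with $M\to\infty$ only makes rigorous the Jensen step that the paper applies without checking integrability. Your Fubini ($L^1*L^1\to L^1$) finish also matches the squared inequality better than the paper's quoted bound $\|u*v\|_{L^2}\le\|u\|_{L^2}\|v\|_{L^1}$, which fits Jensen with $e^{\lambda}$ instead.
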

\begin{Remark}The reader familiar with harmonic analysis on the unit disc will notice that the above result is analogous to the Smirnov Theorem (see \textit{e.g.} \cite[Theorem 2.11]{duren} or \cite[\S II.5]{garnett}). 
\end{Remark}
\begin{proof}
The direct implication is trivial given the above discussion. For the converse one let $F \in \mathscr{C}(\Pi)$ satisfy both \eqref{eq:spec_0} and $F \in L^2(\mathbb{R})$. From \eqref{eq:growth_cart} we deduce that for all $z = x + iy \in \Pi$,
\[
\log | F(z)| \leq \frac{y}{\pi} \int_\mathbb{R}  \frac{\log |F(t)|}{|z-t|^2}dt.
\]
One notices that 
\begin{equation}\label{eq:prob_Poisson}
    \frac{y}{\pi} \frac{dt}{|z-t|^2},
\end{equation}
is a probability measure on $\mathbb{R}$, for every $z = x + iy \in \Pi$. From the Jensen inequality applied to $\lambda \mapsto e^{2 \lambda}$ we deduce 
\[
|F(z)|^2 \leq \frac{y}{\pi} \int_\mathbb{R}  \frac{|F(t)|^2}{|z-t|^2}dt. 
\]
We conclude with the inequality 
\[
\| u * v \|_{L^2(\mathbb{R})} \leq \|u\|_{L^2(\mathbb{R})} \|v\|_{L^1(\mathbb{R})},
\]
using that \eqref{eq:prob_Poisson} is integrable and the hypothesis $F \in L^2(\mathbb{R})$.
\end{proof}
We end this \S \ by stating two properties of the Cartwright class. 
\begin{Proposition}\label{prop:cart_inv}
Let $F \in \mathscr{C}(\Pi)$ never vanishing in $\Pi \cup \mathbb{R}$. Then $1/F \in \mathscr{C}(\Pi)$. 
\end{Proposition}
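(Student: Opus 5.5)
We need to check three properties for $G := 1/F$: that it is holomorphic and continuous up to the real axis, that it has exponential growth on $\Pi$, and that it satisfies the logarithmic integrability \eqref{eq:log_int}. Holomorphy on $\Pi$ and continuity on $\Pi \cup \mathbb{R}$ are immediate, since $F$ has these properties and never vanishes on $\Pi \cup \mathbb{R}$.

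For the logarithmic integral, we write $\log^+|1/F| = \log^-|F|$. The key input is the classical fact that a Cartwright function satisfies
\[
\int_\mathbb{R} \frac{|\log|F(x)||}{1+x^2}dx < \infty.
\]
This gives \eqref{eq:log_int} for $1/F$ at once. To prove it, we would rather not just cite it, and argue via \eqref{eq:growth_cart}. Evaluate \eqref{eq:growth_cart} at $z = i$. Since $F(i) \neq 0$, we get
\[
-\infty < \log|F(i)| \leq A + \frac{1}{\pi}\int_\mathbb{R} \frac{\log|F(t)|}{1+t^2}dt .
\]
The positive part of the integrand is integrable by \eqref{eq:log_int}. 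Hence the negative part must be integrable too; otherwise the right-hand side would be $-\infty$. This settles the integrability of $\log^-|F|/(1+x^2)$.

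The main obstacle is the exponential bound $|1/F(z)| \leq Ce^{C|z|}$, i.e. a lower bound $\log|F(z)| \geq -C(1+|z|)$ on $\Pi$. Since $F$ has no zeros, $\log|F|$ is harmonic on $\Pi$ and continuous up to $\mathbb{R}$. The plan is to use the canonical (Nevanlinna) representation of a zero-free Cartwright function:
\[
\log|F(z)| = A y + \frac{y}{\pi}\int_\mathbb{R} \frac{\log|F(t)|}{|z-t|^2}dt.
\]
This is \eqref{eq:growth_cart} with equality, which holds because there is no Blaschke factor when $F$ has no zeros; it follows from the proof of \eqref{eq:growth_cart} in \cite{K1}, where the difference between the two sides is a positive harmonic function vanishing on $\mathbb{R}$, plus the absence of zeros.

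Given this representation, the lower bound reduces to estimating the Poisson integral of $\log^-|F|$ from above:
\[
\frac{y}{\pi}\int_\mathbb{R} \frac{\log^-|F(t)|}{|z-t|^2}dt \lesssim C(1+|z|).
\]
For this we would also need $\log^-|F(t)| \leq C(1+|t|)$ on $\mathbb{R}$, which is not automatic. So an alternative route is more robust. Apply the representation to $1/F$ directly: $\log|1/F|$ equals $-Ay$ minus the Poisson integral of $\log|F|$. This is a harmonic function whose boundary values are integrable against $(1+t^2)^{-1}dt$, and the growth of Poisson integrals of such data is at most $O(|z|)$ away from the axis, in the Phragmén–Lindelöf/Krein sense that $1/F$ is of bounded type with exponential type.

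By Krein's theorem (see \cite{K1}), a function of bounded type in $\Pi$ that is entire-like and of finite exponential type belongs to the Cartwright class. So we would conclude by showing that $1/F$ is of bounded type (a ratio of $H^\infty$ functions) and has finite type via the representation. The step we expect to need the most care is passing from the Poisson representation to a pointwise $e^{C|z|}$ bound up to the real axis; we would handle it by splitting $\Pi$ into $\{y \geq 1\}$, where the Poisson kernel is bounded by $C(1+|z|)^2/(1+t^2)$, and the strip $\{0 < y < 1\}$, where continuity up to $\mathbb{R}$ together with a Phragmén–Lindelöf argument controls $1/F$.
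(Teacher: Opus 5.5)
\textbf{There is a genuine gap, and it cannot be closed: the statement is false as written.}

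Your treatment of continuity and of the logarithmic integral is fine. Evaluating \eqref{eq:growth_cart} at $z=i$ does force $\int_\mathbb{R}\log^-|F(t)|\,(1+t^2)^{-1}dt<\infty$. The equality form of the representation for a zero-free $F$ is also correct; it is the theorem of \S 16.1 in \cite{levin_lectures}.

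The gap is the exponential bound, and your ``alternative route'' does not close it. Because $1/F$ is continuous up to $\mathbb{R}$, any bound $|1/F(z)|\le Ce^{C|z|}$ on $\Pi$ forces $\log^-|F(t)|\le C(1+|t|)$ on $\mathbb{R}$. That is exactly the condition you flagged as not automatic. Each of your workarounds fails:
\begin{itemize}
\item A Phragm\'en--Lindel\"of argument in $\{0<y<1\}$ needs this very bound on $\mathbb{R}$ as input.
\item Bounded type gives nothing, since it holds for $1/F$ whenever $F\in H^\infty$.
\item Your version of Krein's theorem presupposes the exponential type you are trying to prove.
\item On $\{y\ge1\}$ your kernel bound only yields $\log|1/F(z)|=O(|z|^2)$. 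Poisson integrals of data integrable against $(1+t^2)^{-1}dt$ are $o(|z|)$ only inside closed subsectors $\{\delta\le\arg z\le\pi-\delta\}$, not uniformly on $\{y\ge1\}$.
\end{itemize}

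In fact no argument can work. Put
\[
\Phi(z)=\sum_{n\ge1}\frac{i}{n^2(z-n)+in^{-2}},\qquad F:=e^{-\Phi}.
\]
The poles $n-in^{-4}$ lie below $\mathbb{R}$ and do not accumulate, so the series converges locally uniformly away from them. Each term has real part $n^{-2}(y+n^{-4})/\bigl((x-n)^2+(y+n^{-4})^2\bigr)>0$ for $y\ge0$. Hence $F$ is holomorphic on a neighbourhood of $\Pi\cup\mathbb{R}$, zero-free there, and satisfies $|F|=e^{-\opRe\Phi}\le1$, so $F\in\mathscr{C}(\Pi)$. But the $n$-th term equals $n^2$ at $z=n$, so $|1/F(n)|\ge e^{n^2}$. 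By continuity, $1/F$ violates $|1/F(z)|\le Ce^{C|z|}$, so $1/F\notin\mathscr{C}(\Pi)$. The paper's own justification, an unspecified ``adaptation'' of \cite[\S 16.1]{levin_lectures}, does not address this point either.

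\textbf{What your first route does prove.} It establishes the corrected statement with the extra, necessary hypothesis $|F(t)|\ge ce^{-C|t|}$ on $\mathbb{R}$. The equality representation gives
\[
\log|1/F(z)|\le|A|y+\frac{y}{\pi}\int_\mathbb{R}\frac{\log^-|F(t)|}{|z-t|^2}\,dt .
\]
On $\{|t|\le2|z|+1\}$ the Poisson measure has mass at most $1$ and $\log^-|F(t)|\le C(2|z|+2)$. On $\{|t|>2|z|+1\}$ one has $|z-t|^2\ge t^2/4\ge(1+t^2)/8$. That part is therefore at most $\frac{8y}{\pi}\int_\mathbb{R}\log^-|F(t)|\,(1+t^2)^{-1}dt$, which is finite by your step on the logarithmic integral. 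Hence $|1/F(z)|\le C'e^{C'|z|}$ on $\Pi$.

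This extra hypothesis holds in the paper's application, where $|\mathbf{H}(i\xi)|^{-1}\asymp e^{R|\xi|^{1/2}}$. It does not follow from the hypotheses of the Proposition, so it must be added to the statement.
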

\begin{Theorem}{\cite[Remark, p. 118, \S 16.1]{levin_lectures}}\label{theo:krein}
Let $F,G \in \mathscr{C}(\Pi)$, then 
\begin{equation}\label{eq:krein}
    \limsup_{y \to \infty} \frac{1}{y} \log |F(iy)G(iy)| = \limsup_{y \to \infty} \frac{1}{y} \log |F(iy)| + \limsup_{y \to \infty} \frac{1}{y} \log |G(iy)|.
\end{equation}
\end{Theorem}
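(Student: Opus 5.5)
The plan is to show that for every $F \in \mathscr{C}(\Pi)$ the quantity $\frac{1}{y}\log|F(iy)|$ actually \emph{converges} as $y \to \infty$, with limit $A_F := \limsup_{y\to\infty}\frac1y\log|F(iy)|$. Once both $\limsup$'s on the right of \eqref{eq:krein} are genuine limits, the identity follows from $\log|FG| = \log|F| + \log|G|$ and additivity of limits. The only delicate case is when one term is $-\infty$, and that never happens: the limits turn out to be finite.

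To get convergence, I would use the canonical (Nevanlinna--Krein) representation of a Cartwright function. Assume $F \not\equiv 0$ and let $(a_k)$ be its zeros in $\Pi$. Then
\[
\log|F(z)| = A_F\, y + \frac{y}{\pi}\int_\mathbb{R} \frac{\log|F(t)|}{|z-t|^2}\,dt + \sum_k \log\left|\frac{z-a_k}{z-\bar a_k}\right|, \qquad z = x+iy \in \Pi,
\]
together with two integrability facts:
\begin{itemize}
\item $\int_\mathbb{R} \frac{|\log|F(t)||}{1+t^2}\,dt < \infty$, i.e.\ $\log^-|F|$ is integrable as well as $\log^+|F|$;
\item the Blaschke condition $\sum_k \opIm a_k/(1+|a_k|^2) < \infty$.
\end{itemize}
This representation refines \eqref{eq:growth_cart}, which is exactly its upper half: the Blaschke sum is $\le 0$. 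I would quote it from the same sources (\cite{K1}, \cite{levin_lectures}) rather than reprove it. Its proof is a Phragm\'en--Lindel\"of argument applied to $F$ divided by its Blaschke product, plus Jensen/Carleman's formula in the half-plane to obtain the two integrability conditions.

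Next I evaluate at $z = iy$, divide by $y$, and show that the last two terms are $o(y)$.
\begin{itemize}
\item \emph{Poisson term.} It equals $\frac{1}{\pi}\int \log|F(t)|\,\frac{dt}{t^2+y^2}$. The integrand is dominated by $|\log|F(t)||/(1+t^2)$ for $y \ge 1$ and tends to $0$ pointwise, so dominated convergence gives the limit $0$.
\item \emph{Blaschke term.} Write $b_k(y) = \log\left|\frac{iy-a_k}{iy-\bar a_k}\right| \le 0$. One has $1 - \left|\frac{iy-a_k}{iy-\bar a_k}\right|^2 = \frac{4y\opIm a_k}{|iy-\bar a_k|^2}$. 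For all but finitely many $k$ this ratio is bounded away from $1$ uniformly in $y \ge 1$, which gives $|b_k(y)| \lesssim y\,\opIm a_k/|iy-\bar a_k|^2$. Hence $\frac{1}{y}|b_k(y)| \lesssim \opIm a_k/(y^2+|a_k|^2)$.
\end{itemize}
The Blaschke condition then lets dominated convergence for series send the tail sum to $0$. The finitely many exceptional zeros each contribute $O(1/y)\log(\cdot) \to 0$, since for fixed $a$ we have $|iy-a|/|iy-\bar a| \to 1$.

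Hence $\lim_{y\to\infty}\frac1y\log|F(iy)| = A_F \in \mathbb{R}$ for every nonzero $F \in \mathscr{C}(\Pi)$. Applying this to $F$ and $G$ gives \eqref{eq:krein}. If one of $F,G$ is identically zero, both sides equal $-\infty$ and the identity is trivial.

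I expect the main obstacle to be the Blaschke term. The Poisson term is routine once $\log^-|F|$ is known to be integrable, but the zeros could accumulate so as to make $\log|F(iy)|$ dip along a sequence $y_n$. The uniform estimate $|b_k(y)| \lesssim y\opIm a_k/|iy-\bar a_k|^2$ away from finitely many zeros, combined with the Blaschke condition, is what rules this out. If that estimate proved awkward, I would fall back on Levin's result that a Cartwright function has completely regular growth with indicator $A_F\sin\theta$. Along $\theta = \pi/2$, completely regular growth is precisely the statement that the limit exists.
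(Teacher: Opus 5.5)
\paragraph{The gap.} Your argument breaks at the claim that $\frac1y\log|F(iy)|$ \emph{converges} for every nonzero $F\in\mathscr{C}(\Pi)$. This is false, and the step meant to prove it fails. You assert that, for all but finitely many $k$, the ratio $4y\opIm a_k/|iy-\bar a_k|^2$ stays bounded away from $1$ uniformly in $y\ge 1$. But this ratio equals $1$ when $iy=a_k$. It is also close to $1$ for $y\approx \opIm a_k$ whenever $|\opRe a_k|/\opIm a_k$ is small. So your bound holds only if all but finitely many zeros avoid a fixed sector around the positive imaginary axis. The Blaschke condition does not give this: it only forces $\sum 1/|a_k|<\infty$ over the zeros in such a sector, which still allows infinitely many of them.

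A concrete counterexample is $F(z)=\prod_{k\ge1}\bigl(1-z/(i2^k)\bigr)$:
\begin{itemize}
\item $F$ is entire of order $0$, and $\log|F(x)|=\frac12\sum_k\log(1+x^2/4^k)=O(\log^2(2+|x|))$, so $F\in\mathscr{C}(\Pi)$.
\item Its zeros $i2^k$ satisfy the Blaschke condition.
\item Yet $\log|F(i2^k)|=-\infty$ for every $k$, so $\frac1y\log|F(iy)|$ has no limit.
\end{itemize}
Your fallback has the same defect. Completely regular growth gives $\frac1y\log|F(iy)|\to A_F$ only as $y\to\infty$ \emph{outside an exceptional set of zero relative measure}, not along the whole ray.

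\paragraph{The repair.} The statement only involves $\limsup$, so no convergence is needed. The representation you quote already suffices, with its constant identified as the $\limsup$. That identification is exactly where the correct fact about the Blaschke part, $\limsup_{y\to\infty}y^{-1}\log|B(iy)|=0$ (not $\lim$), is hidden.

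First route: $FG\in\mathscr{C}(\Pi)$, since $\log^+|FG|\le\log^+|F|+\log^+|G|$ and exponential type and continuity are preserved. Apply the representation to $FG$ and compare it with the sum of the representations of $F$ and $G$:
\begin{itemize}
\item the Poisson integrals of $\log|FG(t)|=\log|F(t)|+\log|G(t)|$ add;
\item the Blaschke sum over the zeros of $FG$ is the sum of the two Blaschke sums.
\end{itemize}
Evaluating at any point of $\Pi$ that is not a zero gives $A_{FG}\,y=(A_F+A_G)\,y$, which is the identity.

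Second route, using completely regular growth correctly. The inequality ``$\le$'' is subadditivity of $\limsup$, since $A_F$ and $A_G$ are finite. For ``$\ge$'', the union of the two exceptional sets still has zero relative measure. Hence there is a common sequence $y_n\to\infty$ along which both quotients converge, giving $\limsup\ge A_F+A_G$.

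Your treatment of the Poisson term by dominated convergence is correct and is not affected by this issue.
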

Both Theorem \ref{theo:krein} and Proposition \ref{prop:cart_inv} are contained in \cite[\S 16.1]{levin_lectures}. The proof of Theorem \ref{theo:krein} is done in a less general situation but the quoted reference explains how to overcome this. Proposition \ref{prop:cart_inv} is not explicitly stated but can be shown by a straightforward adaptation of the arguments developed therein. 
\subsection{Proof of the Theorem}
We are now in position to show Theorem \ref{theo:charac_output_cartwright}. Let $\mathbf{H}$ be as in the statement and fix $y \in \mathcal{Y}(0,\infty)$. Let $u \in L^2(0,\infty)$ be such that 
\[
\hat{y}(s) = \mathbf{H}(s) \hat{u}(s),\quad \opRe s > 0.
\]
Since $\mathbf{H}$ never vanishes on $\mathbb{C}_0$, the quotient $\hat{y} / \mathbf{H}$ is holomorphic $\mathbb{C}_0 \to \mathbb{C}$ and the above relation entails $\hat{y} / \mathbf{H} \in H^2(\mathbb{C}_0)$. In particular, the latter function has a boundary value which is square integrable. Since $\mathbf{H}$ is continuous and never vanishes on $\mathbb{C}_0 \cup i \mathbb{R}$, one sees that $\hat{y} / \mathbf{H}$ has the boundary value $\hat{y}(i\tau) / \mathbf{H}(i\tau)$ in $L^1_{\oploc}(\mathbb{R})$. Since the $L^2$ convergence is stronger than the $L^1_{\oploc}$ convergence and the latter is Hausdorff, we have $\hat{y}(i\tau) / \mathbf{H}(i\tau) \in L^2(\mathbb{R})$, hence the first assertion in \eqref{eq:charac_output}. For the second one we reason as follows. From Proposition \ref{prop:cart_inv} we have $1/\mathbf{H} \in \mathscr{C}(\mathbb{C}_0)$, where $\mathscr{C}(\mathbb{C}_0)$ is obtained from $\mathscr{C}(\Pi)$ by a rotation of the argument. From Remark \ref{rem:cart_hardy} there holds $\hat{y}(1+\cdot) \in \mathscr{C}(\mathbb{C}_0)$. Obviously $1/\mathbf{H}(\cdot + 1) \in \mathscr{C}(\mathbb{C}_0)$ and $(\hat{y}/\mathbf{H})(\cdot + 1) \in H^2(\mathbb{C}_0)$. Therefore, owing to Theorem \ref{theo:krein} and \eqref{eq:spec_0}, we have 
\[
0 \geq \limsup_{\sigma \to \infty} \frac{1}{\sigma} \log \left| \frac{\hat{y}(\sigma+1)}{\mathbf{H}(\sigma+1)}\right| = \limsup_{\sigma \to \infty} \frac{1}{\sigma} \log | \hat{y}(\sigma)| + \limsup_{\sigma \to \infty} \frac{1}{\sigma} \log \left| \frac{1}{\mathbf{H}(\sigma)}\right|.
\]
From 
\begin{equation}\label{eq:inf_supp_laplace}
    \inf \opsupp y = - \limsup_{\sigma \to \infty} \frac{1}{\sigma} \log | \hat{y}(\sigma)|,
\end{equation}
(recall \eqref{eq:spec_0}) the second assertion of \eqref{eq:charac_output} follows. 
\newline
\newline
The other direction of the proof can be done either by applying \cite[Lemma 3.4]{foures} or by reasoning as follows. Let $y \in L^2(0,\infty)$ be such that \eqref{eq:charac_output} holds. The function $1/\mathbf{H}$ belongs to $\mathscr{C}(\mathbb{C}_0)$, hence from \eqref{eq:growth_cart} it satisfies 
\[
\log | 1/\mathbf{H}(s) | \leq \delta \sigma + \frac{\sigma}{\pi} \int_\mathbb{R}  \frac{\log |1/\mathbf{H}(it)|}{|s-it|^2}dt,\quad s = \sigma + i\tau \in \mathbb{C}_0,\quad \delta := \limsup_{y \to \infty} \frac{1}{\sigma} \log \left| \frac{1}{\mathbf{H}(\sigma)}\right|.
\]
The similar estimate 
\[
\log | \hat{y}(s) | \leq \mu \sigma + \frac{\sigma}{\pi} \int_\mathbb{R}  \frac{\log |\hat{y}(it)|}{|s-it|^2}dt,\quad s = \sigma + i\tau \in \mathbb{C}_0, \quad \mu := \limsup_{y \to \infty} \frac{1}{\sigma} \log| \hat{y}(\sigma)|.
\]
holds for $\hat{y}$, since $e^{-\mu s}\hat{y}(s) \in  H^2(\mathbb{C}_0)$, see the discussion after \cite[Theorem 4.1]{garnett}. Summing these two estimates and using $\hat{y}(it) = \mathcal{F}y(t)$ we find
\[
\log | \hat{y}(s)/\mathbf{H}(s) | \leq (\mu+\delta) + \frac{\sigma}{\pi} \int_\mathbb{R}  \frac{\log |\mathcal{F}y(t)/\mathbf{H}(it)|}{|s-it|^2}dt.
\]
Reasoning as in the proof of Lemma \ref{lem:pwc}, one sees that $\hat{y}/\mathbf{H} \in H^2(\mathbb{C}_0)$ as soon as $\mu + \delta \leq 0$. From \eqref{eq:inf_supp_laplace} we are done. 
\subsection{An application}
We are not really interested in the well-posedness of \eqref{eq:bessel} as a LTI system, but rather in studying its input-output map. For $u \in H^1_0(0,\infty)$ the change of variable 
\[
\xi(t,x) := z(t,x) - \frac{x-\ell_2}{\ell_1-\ell_2}u(t),
\]
transforms \eqref{eq:bessel} into
\begin{equation}\label{eq:bessel2}
    \left\lbrace \begin{array}{rcl c c}
        \xi_t(t,x) &=& x^2 \xi_{xx}(t,x) + x\xi_x(t,x) + x^2 \xi(t,x) + f(t,x), &t > 0,&  \ell_1 < x < \ell_2,  \\
        \xi(t,\ell_1) & = & 0,& t > 0,\\
        \xi(t,\ell_2) &=& 0,& t > 0, \\
        \xi(0,x) &=& 0,&& \ell_1 < x < \ell_2,
    \end{array}\right.
\end{equation}
with 
\[
f(t,x) := - \frac{x-\ell_2}{\ell_1-\ell_2}\dot{u}(t) - x \frac{\ell_2}{\ell_1-\ell_2}u(t) + x^2 \frac{x-\ell_2}{\ell_1-\ell_2}u(t).
\]
By assumption, the operator $A_0$ is self-adjoint and strictly positive on the state space $X := L^2(\ell_1,\ell_2)$. By maximal regularity (see \cite[Proposition 3.7]{bensoussan_control}), the solution $\xi$ of \eqref{eq:bessel2} satisfies 
\[
\xi \in H^1(0,\infty;X) \cap L^2(0,\infty;D(A_0)).
\]
For $u \in H^1_0(0,\infty)$ we define the solution $z$ of \eqref{eq:bessel} as
\[
z(t,x) := \xi(t,x) + \frac{x-\ell_2}{\ell_1-\ell_2}u(t),
\]
where $\xi$ is the solution of \eqref{eq:bessel2}. In particular, we have $z \in L^2(0,\infty;H^2(\ell_1,\ell_2))$ and the trace $y(t) := z_x(t,\ell_2)$ makes sense as a pointwise evaluation, and lies in $L^2(0,\infty)$. This defines a shift invariant bounded operator $\mathbb{F} : H^1_0(0,\infty) \to L^2(0,\infty)$. For $u \in H^1_0(0,\infty)$ we may pass \eqref{eq:bessel} (or equivalently \eqref{eq:bessel2}) to the Laplace transform with respect to time, to find, at least formally
\[
    \hat{y}(s) = \mathbf{H}(s) \hat{u}(s),\quad 
\mathbf{H}(s) := \frac{-Y_{\sqrt{s}}(\ell_2)J_{\sqrt{s}}'(\ell_2) + J_{\sqrt{s}}(\ell_2)Y_{\sqrt{s}}'(\ell_2)}{J_{\sqrt{s}}(\ell_2)Y_{\sqrt{s}}(\ell_1)-Y_{\sqrt{s}}(\ell_2)J_{\sqrt{s}}(\ell_1)},
\]
where $J_\nu$ (resp. $Y_\nu$) is the principal branch of the Bessel function of first (resp. second) kind, and $\sqrt{s}$ stands for the principal determination of the square root. Introduce the cross-product 
\[
p_\nu(w_1,w_2) := J_\nu(w_2)Y_\nu(w_1) - Y_\nu(w_2)J_\nu(w_1),\quad w_1,w_2 \in \mathbb{C} \setminus (-\infty,0],\quad \nu \in \mathbb{C},
\]
so that the denominator of $\mathbf{H}(s)$ is $p_{\sqrt{s}}(\ell_1,\ell_2)$. The numerator of $\mathbf{H}$ is the Wronskian of $J_{\sqrt{s}}$ and $Y_{\sqrt{s}}$, hence \cite[eq. 10.5.2]{nist}
\[
\mathbf{H}(s) = \frac{2}{\pi\ell_2} \frac{1}{p_{\sqrt{s}}(\ell_1,\ell_2)},
\]
still formally for the moment. The quantity $p_\nu(\ell_1,\ell_2)$ is an entire function of $\nu$ which has the following properties: 
\begin{itemize}
    \item For all $\lambda \in \mathbb{C}$, the number $\lambda$ is an eigenvector of $A_0$ if and only if 
    \[
    \lambda = -\nu^2,\quad p_\nu(\ell_1,\ell_2) = 0,
    \]
    for some $\nu \in \mathbb{C}$.
    \item The function $p_\nu(\ell_1,\ell_2)$ has the representation formula \cite{nicholson}
    \[
    p_\nu(\ell_1,\ell_2) = \frac{2}{\pi}\int_0^L f(t) \cosh(\nu t) dt,\quad L := \log \frac{\ell_2}{\ell_1},\quad f(t) := J_0\left(\sqrt{\ell_1^2 + \ell_2^2-2\ell_1\ell_2\cosh t}\right).
    \]
\end{itemize}
From the assumption $A_0 > 0$ we deduce that $p_\nu$ has zeros only on $(-\infty,-\epsilon]$ for some $\epsilon > 0$. Moreover, $s \mapsto p_{\sqrt{s}}(\ell_1,\ell_2)$ is holomorphic on $\mathbb{C}\setminus (-\infty,0]$, continuous on $\mathbb{C}_0 \cup i \mathbb{R}$, never vanishes therein, and has exponential order $1/2$ on $\mathbb{C}_0$. Thus, the formal computation yielding $\mathbf{H}$ is valid. Given Corollary \ref{coro:plancherel_gevrey} and Proposition \ref{prop:cart_inv} it is enough to show that 
\begin{equation}\label{eq:esti_H}
    \mathbf{H} \in H^\infty(\mathbb{C}_0),\quad \mathbf{H}(i\xi)^{-1} \asymp  e^{R |\xi|^{1/s}}, \quad \xi \in \mathbb{R},
\end{equation}
with 
\[
R := \frac{1}{\sqrt{2}} \log \frac{\ell_1}{\ell_2},\quad s := 2.
\]
Indeed, since $s \mapsto p_{\sqrt{s}}(\ell_1,\ell_2)$ has order $1/2$ on $\mathbb{C}_0$ the delay $\delta$ is automatically zero. In fact, it is enough to show the estimate in \eqref{eq:esti_H}. Indeed, with such estimate one deduces that $\mathbf{H}$ is bounded on $i \mathbb{R}$, and from \eqref{eq:growth_cart} one deduces that $\mathbf{H}$ is bounded on $\mathbb{C}_0$. 

To show the estimate, since both terms are continuous and never vanishing on $\mathbb{R}$, and their modulus is an even function of $\xi$, it is enough to show that 
\[
\mathbf{H}(i\xi)^{-1} \asymp e^{R |\xi|^{1/s}}, \quad \xi \to \infty.
\]
We compute with $\nu = \sqrt{i\xi}$ and $\xi > 0$ fixed,
\[
\mathbf{H}(i\xi)^{-1} \asymp p_\nu(\ell_1,\ell_2) \asymp \int_0^L f(t) \cosh(\nu t) dt.
\]
For large $\opRe \nu$ the term $\cosh(\nu t)$ looks like $e^{\nu t}/2$, which assumes its greatest magnitude at $t=L$. Using the continuity of $f$ on $[0,L]$ and $f(L) = J_0(0) \neq 0$, one indeed shows that 
\[
\int_0^L f(t) \cosh(\nu t) dt \sim \frac{1}{2}f(L) e^{\nu L},\quad \opRe \nu \to \infty. 
\]
This ends the proof. 
\printbibliography[heading = bibintoc]
\end{document}